\renewcommand*{\backref}[1]{}
\renewcommand*{\backrefalt}[4]{%
 \ifcase #1
   (Not cited.)%
 \or 
   \pdftooltip[mathstyle=\displaystyle]{{\color{xkcdGrey}$\hookleftarrow$}}{Appears on} \color{white!30!black}page~#2.%
 \else
   \pdftooltip[mathstyle=\displaystyle]{{\color{xkcdGrey}$\hookleftarrow$}}{Appears on} \color{white!30!black}pages~#2.%
 \fi}
\let\oldnl\nl
\newcommand{\nlnonumber}{\renewcommand{\nl}{\let\nl\oldnl}}
\Crefname{subsection}{Subsection}{Subsections}
\theoremstyle{definition}
\theoremstyle{remark}
\newtheorem{remark}{Remark}
\theoremstyle{definition}
\newtheorem{theorem}{Theorem}
\newtheorem{lemma}{Lemma}
\newtheorem{corollary}{Corollary}[theorem]
\theoremstyle{definition}
\renewenvironment{proof}[1][\proofname]{\par
  \pushQED{\qed}%
  \normalfont \topsep0pt\relax 
  \trivlist
  \item[\hskip\labelsep
        \itshape
    #1\@addpunct{.}]\ignorespaces
}{%
  \popQED\endtrivlist\@endpefalse
}
\newtcolorbox{highlighter}{colback=xkcdGoldenrod, boxrule=0pt, sharp corners, boxsep=0pt, left=\fboxsep, right=\fboxsep, parbox=false, breakable}
\newcommand{\embh}[1]{%
\IfDarkModeTF{{\color{white!80!\thepagecolor}\textbf{#1}}}{{\color{black!80}\textbf{#1}}}%
}
\definecolor{algcolor}{rgb}{0.97,0.97,0.97} 
\definecolor{IMAcolor}{RGB}{187,39,113} 
\colorlet{TocColor}{white!10!black}
\newcommand{\R}{\mathbb{R}}		
\newcommand{\Z}{\mathbb{Z}}		
\newcommand{\C}{\mathbb{C}}		
\newcommand{\llb}{\llbracket}
\newcommand{\rrb}{\rrbracket}
\newcommand{\Ones}[1][n]{\mathds{1}_{#1}}		
\newcommand{\Li}{\operatorname{Li}_2}
\newcommand{\disc}{\raisebox{.05em}{$\bigcirc$}}
\newcommand{\smalldisc}{\scaleobj{0.9}{\newmoon}}
\title{The Newtonian kernel at the intersection of two discs}
\author{%
    Andrés Miniguano--Trujillo \orcidlink{0000-0002-0877-628X}%
    \thanks{School of Mathematics and Maxwell Institute for Mathematical Sciences, The University of Edinburgh, Edinburgh, United Kingdom
    (\texttt{Andres.Miniguano-Trujillo@ed.ac.uk} \orcidlink{0000-0002-0877-628X})}
    }
\date{}
\begin{document}
\spacing{1.213}
\maketitle

\vspace{-1.5\baselineskip}

\begin{abstract}
	
We present an exact, closed-form expression for the Newtonian potential of the characteristic function associated with two overlapping discs in the plane. This setting naturally arises when discretising nonlocal interaction terms present in models of phase separation, aggregation dynamics, and quantum systems. We characterise the convolution integral
as a piecewise function on the distance between the disc centres, with transitions dictated by the geometry of the overlapping region. Additionally, we derive detailed asymptotic expansions for the small-overlap regime, which allows us to provide stable double-precision codes.
\end{abstract}

\vspace{-0.5\baselineskip}
{\footnotesize
\textbf{keywords:}
Newtonian potential {\textbullet} asymptotic approximations {\textbullet} numerical integration
}


\section{Introduction}\label{ch:Intro}

Many contemporary models in physics, biology, and materials science involve nonlocal self-interac\-tions in which an evolving density is coupled to itself through the solution of an intermediate Poisson equation, typically realised via convolution with the Newtonian potential. In quantum settings, such nonlocalities arise in Schrödinger-Poisson and Schrödinger-Newton models of self-gravitating matter \cite{Bonheure2021a,Cassani2021a,Harrison2002a}, in gravity models based on Bose-Einstein condensates \cite{Girelli2008a,Bettoni2014a}, and in studies of dipolar quantum gases \cite{Smith2023a,Bao2010a}. Convolution-type equations govern a wide range of phenomena, including viscous and inviscid aggregation with Newtonian interaction \cite{Cozzi2017a,DiFrancesco2021a,Bertozzi2012a}, vortex-density dynamics in type-II superconductors \cite{Masmoudi2005a,Weinan1994a}, chemotactic collapse and pattern formation within Keller–Segel models \cite{Calvez2007a,Perthame2015,He2025a}, and the long-time behaviour of interacting species \cite{Carrillo2019a}. In materials science and biology, nonlocal interactions based on the same Newtonian potential play a central role in phase‑separation dynamics, notably in variants of the Cahn--Hilliard equation \cite{Giacomin1996,AGGP,Gal2017,Poiatti2024}, and appear in tumour‑growth models involving chemotaxis and active transport \cite{Garcke2016,Fornoni2023}.

Despite significant advances in temporal discretisation and operator--splitting, the numerical treatment of singular kernel convolutions frequently relies on spectral and pseudospectral codes over a box \cite{Tsubota2017a,Smith2023a,Bao2010a,Harrison2002a,AMT_Th_2024}. The present work furnishes an exact, closed‑form expression for the convolution of the Newtonian kernel with the indicator of two overlapping discs. This configuration provides one of the few nontrivial benchmark cases in which such an interaction can be computed analytically, and it serves as a key building block in a companion work, where it is leveraged to design high-order methods in circular domains.

Let \(K\) denote the Newtonian kernel
\begin{equation}\label{eq:Newtonian_potential}
	K: \R^2 \setminus\{0\} \ni x  \longmapsto  \frac{1}{2\pi} \log \|x\| = \frac{1}{4\pi} \log ( x_1^2 + x_2^2 ) \in \R.
\end{equation}
By definition, \cref{eq:Newtonian_potential} is radially symmetric. Hence, we will often use the shorthand \( k: \R \setminus \{0\} \ni r  \longmapsto  \frac{1}{4\pi} \log r^2 \in \R\) so that \(K(x) = k(r)\), where \(r = \|x\|\) denotes the Euclidean norm\footnote{Unless otherwise stated, we will from now on assume that every normed quantity is defined in terms of the Euclidean norm \(
	\| \cdot \|: \R^n  \ni x  \longmapsto  \Big( \sum\limits_{i=1}^n x_i \Big)^{ \sfrac 1 2 } \hspace{-0.2em} \in \R_{\geq 0}
\).} of \(x\). 

The Newtonian kernel is a differentiable function, and one may show that \(K\) and its gradient are locally integrable in \( \R^2\), with \(K \in W_{\text{loc}}^{1,1}(\R^2)\), see \cite[Lemma 18.2.1]{AMT_Th_2024}. Additionally, \(-K\) is the fundamental solution of the Poisson equation in \(\R^2\); i.e., for \( f \in C^2_c (\R^2)\), the convolution \( v = K \star f\)  satisfies \( -\Delta v = f\). Formally, this may be written as \( -\Delta K = \delta_0\). For a detailed discussion, we refer the reader to \cite[\S 2.2]{Evans_2010} and \cite[\S2.4 \& Chapter 4]{Gilbarg2001}. The convolution operator \(\star\) defines the \emph{Newtonian potential}
\begin{equation}\label{eq:Newt_Pot}
	K \star f \, (x) = \int\limits_{ \R^2 } K(x-y) f(y) \dif y.
\end{equation}
The measurability of \(K\) ensures that \cref{eq:Newt_Pot} is well defined for all \( x \in \R^2\). In what follows, we focus on computing \cref{eq:Newt_Pot} for a particular class of indicator functions associated with subsets of the closed unit ball with centre at the origin. Given \(\Omega \subseteq \R^2\), its indicator function \(\Ones[\Omega](x)\) takes the value of \(1\) whenever \( x\in \Omega\) and \(0\) otherwise. 

Throughout this note, we will use a graphical notation to denote certain sets. In particular, let \( \varepsilon \in (0,\sfrac 1 2] \), then we define the unit disc as the closed unit ball \( \disc \coloneqq B[0;1]\) and the intersection of \(\disc\) with the \(\varepsilon\)-disc centred at \(x\) by \( \raisebox{.05em}{\smalldisc} \coloneqq B[0;1] \cap B[x; \varepsilon] \). A comprehensive list of symbols used throughout this note is given in \cref{tb:Symbols}.

We now state the main result of this note:
\begin{theorem}\label{th:Main}
	Let \( E_\varepsilon  \coloneqq K \star \Ones[ \raisebox{.05em}{\smalldisc} ] \) and let \( x \in \R^2 \) with norm \( a \coloneqq \|x\| \). 
	Then, the value of \( E_\varepsilon(x) \) depends on the distance of \(x\) to the origin, and it is given as follows:
	\[
		E_\varepsilon (x) = \frac{1}{2\pi} \int\limits_{ x - \smalldisc } \log |y| \dif y
		=
		\frac{1}{4}
		\begin{cases}
			\varepsilon^2 (\log \varepsilon^2 - 1) 
					& \text{if } a \in [0,1-\varepsilon],
			\\
			\dfrac{1}{\pi} (\pi - \varphi) \varepsilon^2 ( \log \varepsilon^2 - 1) 
			+ 8 F_\varepsilon (a,\varphi)
			& \text{if } 
			a \in (1-\varepsilon, 1+\varepsilon),
			\\
			0 & \text{if } 
			a \in [1+\varepsilon, \infty),
		\end{cases}
	\]
	where the intersection angle \( \varphi = \varphi_\varepsilon (a) \in [0, \pi] \) is defined by
    \[
        \varphi \coloneqq \arccos \frac{1 - a^2 - \varepsilon^2}{2 a \varepsilon}
        \qquad
        \forall a \in [1-\varepsilon,1+\varepsilon]
        .
    \]
    
    The function \( F_\varepsilon: D \to \R\), defined on the set \( D = \cbr{ (a, \varphi) : a \in [1-\varepsilon, 1+\varepsilon],\ \varphi = \varphi_\varepsilon (a) } \), is continuous and given by the following piecewise formula, where for \( a \geq 1\) we introduce \( \alpha \coloneqq \arcsin \sfrac{1}{a} \); then
	\begin{equation}
	\label{eq:Fun_M}
		F_\varepsilon (a, \varphi)
		=
		\frac{1}{8\pi}
		\begin{dcases*}
			G\del{a; \Phi(\varphi)} - \pi (1-a^2) 
									& for \( 1-\varepsilon \leq a < 1\),
			\\
			G\del{1; \Phi(\varphi)} 		& for \(a = 1\),
			\\
			G\del{a; \Phi(\varphi)} - 2\pi \log a - 2G\del{a; (\sfrac 1 4) (\pi + 2\alpha) } 
									& for \( 1 < a < \sqrt{1 + \varepsilon^2} \),
			\\
			- 2\pi \log a - G\del{a; \Phi(\varphi + \pi)} 
									& for \( \sqrt{1 + \varepsilon^2} \leq a \leq 1+\varepsilon  \).
		\end{dcases*}
	\end{equation}
	Here, \( \Phi: [0,2\pi] \to [0, \sfrac \pi 2] \) is the composition
	\[
		s(\theta) \coloneqq -a \cos \theta + \sqrt{ 1 - a^2 \sin^2 \theta },
		\qquad
		L(\theta) \coloneqq \frac{s^2 (\theta) - 1 - a^2}{2a},
		\qquad
		\Phi(\theta) \coloneqq \frac{1}{2} \arccos L(\theta);
	\]
	and \(G: [1-\varepsilon, 1+\varepsilon] \times [0,2\pi] \to \R\) is the continuous function
	\begin{align*}
		G(a;\phi)
		&\coloneqq
		2 \Im \big( \Li (-a e^{2i\phi}) \big)
		+ (1-a^2) \left[ 2 \phi - \arctan\left(\frac{ a \sin 2\phi}{1 + a \cos 2\phi} \right)  \right]
		\\
		&\qquad
		+ a\big( 2- \log(1+ a^2 +2a\cos 2\phi)  \big)\sin 2\phi,
	\end{align*}
	where \( \Li : \C \to \C\) denotes the dilogarithm on a continuous branch.
\end{theorem}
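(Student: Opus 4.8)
The plan is to compute the integral directly in polar coordinates and to reduce the overlap case to a single one-dimensional integral whose antiderivative is the true source of the dilogarithm. First I would apply the change of variables \( z = x-y \), turning \( E_\varepsilon(x) \) into \( \frac{1}{2\pi}\int_{B[0;\varepsilon]\cap B[x;1]}\log|z|\dif z \) (this is exactly the displayed identity, using \( x - B[x;\varepsilon] = B[0;\varepsilon] \) and \( x - B[0;1] = B[x;1] \)), and invoke the radial symmetry of \( K \) to place \( x = (-a,0) \). The two outer cases are then immediate from elementary geometry: if \( a \le 1-\varepsilon \) the small disc lies entirely inside \( B[x;1] \), so the domain is all of \( B[0;\varepsilon] \) and the polar integral \( \frac{1}{2\pi}\int_0^{2\pi}\int_0^\varepsilon \rho\log\rho\dif\rho\dif\theta \) collapses to \( \tfrac14\varepsilon^2(\log\varepsilon^2-1) \); if \( a \ge 1+\varepsilon \) the discs are disjoint and the value is \( 0 \).

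For the overlap regime I would introduce the radial primitive \( P(\rho) \coloneqq \int_0^\rho t\log t\dif t = \tfrac14\rho^2(\log\rho^2-1) \), so that in polar coordinates about the origin \( E_\varepsilon = \frac{1}{2\pi}\int \del{P(r_{\mathrm{out}}(\theta)) - P(r_{\mathrm{in}}(\theta))}\dif\theta \). The two boundary circles meet at \( \theta = \pm\varphi \) with \( \varphi \) as in the statement, and the key geometric step is to identify the radial bounds as functions of \( \theta \). Writing the boundary of \( B[x;1] \) in these coordinates produces precisely the roots \( s_\pm(\theta) = -a\cos\theta \pm \sqrt{1-a^2\sin^2\theta} \) of \( r^2 + 2ar\cos\theta + a^2-1 = 0 \), so \( s = s_+ \) is the \( s(\theta) \) of the theorem. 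Splitting the angular range into the arc where the outer bound is the small circle \( r=\varepsilon \) and the arc where a big-circle root is active isolates the term \( \frac1\pi(\pi-\varphi)\,\varepsilon^2(\log\varepsilon^2-1) \), coming from \( P(\varepsilon) \) over the cap of angular width \( 2(\pi-\varphi) \), and leaves the remainder \( 8F_\varepsilon \) as a signed sum of big-circle contributions \( \pm\int P(s_\pm)\dif\theta \). Here the sub-case structure appears: for \( a<1 \) the origin is interior to \( B[x;1] \) and only the single positive root \( s_+ \) occurs, whereas for \( a>1 \) the ray meets \( B[x;1] \) in a chord \( [s_-,s_+] \) confined to the cone \( |\theta-\pi|\le\alpha \), \( \alpha=\arcsin\sfrac1a \). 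Comparing \( \varepsilon \) with the chord midpoint \( -a\cos\varphi \) shows that the corners \( \pm\varphi \) sit on the far arc (\( \varepsilon = s_+ \)) when \( a<\sqrt{1+\varepsilon^2} \) and on the near arc (\( \varepsilon = s_- \)) when \( a>\sqrt{1+\varepsilon^2} \), which is exactly the threshold separating the last two cases.

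The analytic heart is the closed-form evaluation of \( \int P(s(\theta))\dif\theta \), and this is the step I expect to be the main obstacle. I would change the variable of integration from the polar angle \( \theta \) to the parameter \( \psi \) of the generating circle \( B[x;1] \), equivalently to \( \Phi = \tfrac12(\pi-\psi) \); under this map (which is exactly the composition \( s\mapsto L\mapsto\Phi \) of the statement, since \( L(\theta) = \tfrac{1}{2a}(s^2-1-a^2) = -(a+s\cos\theta) = -\cos\psi \)) one has \( s^2 = 1 - 2a\cos\psi + a^2 = |1-ae^{i\psi}|^2 \), so \( \log s = \Re\log(1-ae^{i\psi}) \). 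The integrand then becomes a trigonometric polynomial in \( \psi \) multiplied by \( \log|1-ae^{i\psi}| \), and integrals of the form \( \int e^{in\psi}\log(1-ae^{i\psi})\dif\psi \) evaluate to dilogarithms; collecting real and imaginary parts yields \( G(a;\phi) \), whose \( 2\Im\Li(-ae^{2i\phi}) \) term is the genuine dilogarithm contribution, whose \( \arctan \) and \( \log(1+a^2+2a\cos2\phi) \) terms are \( \arg(1+ae^{2i\phi}) \) and \( \log|1+ae^{2i\phi}|^2 \), and whose remaining \( (1-a^2)[\cdots] \) and \( a(\cdots)\sin2\phi \) pieces come from the \( -\tfrac14 s^2 \) part and the integration-by-parts boundary terms. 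The genuine difficulty is the branch and principal-value bookkeeping for \( \Li \) and the complex logarithm, which must be fixed so that \( G \) is continuous on the stated range.

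It then remains to insert the correct limits in each sub-case. For \( 1-\varepsilon\le a<1 \) the single arc from \( \theta=0 \) to \( \theta=\varphi \) gives \( G\del{a;\Phi(\varphi)} \), with the constant \( -\pi(1-a^2) \) coming from the lower endpoint. For \( 1<a<\sqrt{1+\varepsilon^2} \) the far-arc corner forces the big-circle contribution to split at the tangent ray \( \theta=\pi-\alpha \), whose \( \Phi \)-value is \( \Phi(\pi-\alpha)=\tfrac14(\pi+2\alpha) \); the terms \( -2\pi\log a \) and \( -2G\del{a;\tfrac14(\pi+2\alpha)} \) together encode this endpoint and the exterior full-disc potential \( \tfrac{1}{2\pi}\int_{B[x;1]}\log|z|\dif z=\tfrac12\log a \) supplied by Newton's theorem, leaving \( G\del{a;\Phi(\varphi)}-2\pi\log a-2G\del{a;\tfrac14(\pi+2\alpha)} \). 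For \( \sqrt{1+\varepsilon^2}\le a\le 1+\varepsilon \) only the near-arc root \( s_- \) survives, which through the reflection \( s_-(\theta) = -s_+(\theta+\pi) \) is handled by \( \Phi(\varphi+\pi) \), producing \( -2\pi\log a - G\del{a;\Phi(\varphi+\pi)} \). I would close by verifying continuity of \( F_\varepsilon \) at the internal junctions \( a=1 \) and \( a=\sqrt{1+\varepsilon^2} \), and matching the endpoints \( a\to(1\mp\varepsilon) \) to the two outer cases; this serves both as a consistency check on the limit bookkeeping and as a reflection of the a priori continuity of \( K\star\Ones[\raisebox{.05em}{\smalldisc}] \).
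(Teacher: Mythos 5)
Your plan is correct and follows essentially the same route as the paper: the same polar decomposition with the cap $\theta\in[\varphi,\pi]$ plus the $s(\theta)$-bounded sectors, the same case split driven by $\Theta(a)$ and the threshold $a^2 = 1+\varepsilon^2$, and the same substitution to the circle parameter (your $\psi$ is the paper's $\pi - 2\Phi(\theta)$), after which $G$ arises exactly as you describe from dilogarithmic antiderivatives of $(1+a\cos 2\phi)\log|1+ae^{2i\phi}|^2$. The one point where your sketch stays heuristic --- attributing the constant $-2\pi\log a$ to Newton's theorem for the exterior disc potential --- is precisely the branch issue you flag; the paper instead pins it down as the endpoint value $G(a;\sfrac{\pi}{2}) = 2\Im\Li(a-i0) = -2\pi\log a$ by tracking that the path $\phi\mapsto -ae^{2i\phi}$ approaches the cut $(1,\infty)$ from below.
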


\begin{remark}
	The domain of \(\Phi\) is fully characterised on whether \(x \in \disc\). In particular,
	\[
		\operatorname{dom} s = 
		\begin{dcases*}
			[0,2\pi] & if \( a \leq 1\),
			\\
			[-\alpha,\alpha] + \pi \, \Z & if \( a > 1 \),
		\end{dcases*}
		\qquad\text{and}\qquad
		\operatorname{ran} s = 
		\begin{dcases*}
			[1-a,1+a] & if \( a \leq 1\),
			\\
			\sbr{-\sqrt{a^2 -1},1+a} & if \( a > 1 \).
		\end{dcases*}
	\]
	In either case, \(\operatorname{ran} L = [-1,1]\), so \(\Phi\) is well defined on \(\operatorname{dom} s\).

	Moreover, the intersection angle \(\varphi\) satisfies	
	\begin{equation}\label{eq:Intersection_Angle_Cos}
	L(\varphi) =\begin{cases}
		\dfrac{ \varepsilon^2  - 1 - a^2}{ 2 a } & \text{if } a^2 \leq 1 + \varepsilon^2,
		\\[1em]
		\dfrac{ (a^2 - 1)^2 }{2 a \varepsilon^2} - \dfrac{1+a^2}{2a} & \text{if } a^2 > 1 + \varepsilon^2.
	\end{cases}
	\end{equation}
\end{remark}

\begin{remark}
	At the boundaries of the intermediate region where \(F_\varepsilon\) is defined, the intersection angle takes the values \( \varphi_{\varepsilon}(1-\varepsilon) = 0\) and \( \varphi_{\varepsilon}(1+\varepsilon) = \pi\). The continuity of \(E_\varepsilon\) across the transitions at \(\{1\pm \varepsilon\}\) implies that the function \(F_\varepsilon\) vanishes at these endpoints.
\end{remark}

Although \(\Ones[\raisebox{.05em}{\smalldisc}]\) is a simple indicator function, its Newtonian potential has a surprisingly rich structure. The reader must be cautious with these expressions, since other simple indicators might suggest an otherwise simpler structure. For example, the Newtonian potential of \(\Ones[\disc]\) is given by the simple formula\footnote{Relevant notebook in repository:
\texttt{\color{xkcdOceanBlue}1 - Disc Integral.ipynb} \newline
An alternative derivation of \cref{eq:Conv_Disc} is provided using a log-cosine integral, and a numerical test of the formula is contrasted against SciPy quadrature.}
\begin{equation}\label{eq:Conv_Disc}
	K \star \Ones[\disc]  (x) = \frac{1}{4} \del{ \|x\|^2 -1 }.
\end{equation}
Indeed, interpreting \cref{eq:Newt_Pot} as a solution of Poisson’s equation quickly yields \cref{eq:Conv_Disc}. Since radial symmetry suggests a radial ansatz \(u(r) = J(x)\), we may write \( \Delta J = 1\) in \(\disc\) in polar coordinates as \( u''(r) + r^{-1} u'(r) + r^{-2} \frac{\partial J}{\partial \theta} = 1 \), where the radial term vanishes. The resulting ordinary differential equation of Euler-Cauchy type \( r u'' + u' = r\) has the solution \( u(r) = (\sfrac{1}{4}) r^2 + c_1 \log r + c_2 \). Since \(u\) is the Newtonian potential of a bounded source, we have \(u \in C^{1,\alpha}(\disc) \) for some \(\alpha >0\). The logarithmic term would generate a discontinuity at the centre, so \(c_1=0\). By rotational symmetry, the trace of \(u\) on \(\partial\disc=\{r=1\}\) is a constant. To determine this constant, we prescribe the additional condition \( \Delta J = 0\) in \(\disc^\mathsf{c}\), which in turn allows us to determine \(c_2\) so that the boundary trace is \(0\), i.e. \(c_2=-\sfrac1 4\), and we obtain \cref{eq:Conv_Disc}.

Analogous formulae for indicator functions of square-shaped regions and their intersections are discussed in \cite[\S19.1]{AMT_Th_2024}.

The rest of this note will be focused on proving Theorem~\ref{th:Main} and providing asymptotic and numerical insights about \cref{eq:Fun_M}. In \cref{sec:Nested}, we derive the first branch of \(E_\varepsilon\) in the regime where the intersection \(\raisebox{.05em}{\smalldisc}\) is itself a disc. \cref{sec:Overlap} establishes a universal bound on \(E_\varepsilon\) and evaluates the integral exactly when \(\raisebox{.05em}{\smalldisc}\) corresponds to a partial overlap of two discs. 
In \cref{sec:Overlap_Disc_inside}, particular attention is provided for the range \( \|x\| \in (1-\varepsilon, 1]\), where two exact formulae are derived for \(F_\varepsilon\), particularly the auxiliary function \(G\) is derived as the indefinite integral of a composite function. We use these workings in \cref{sec:Overlap_Disc_outside} to treat the additional branch \( \| x \| \geq 1\). Finally, \cref{sec:Asymptotics} discusses the asymptotic expansions of \(F_\varepsilon\) near the interface \( \|x\| \approx 1\) as \(\varepsilon\) becomes small and provides numerical insights on its computation under standard and extended precision. All of our numerical experiments are provided in the following repository:
\begin{center}
	\href{https://github.com/DDFT-Modelling/DiscConv}{\texttt{https://github.com/DDFT-Modelling/DiscConv}}
\end{center}

\section{Nested intersection}\label{sec:Nested}

Whenever \( x \in B[0; 1-\varepsilon] \), it holds that \( x - \raisebox{.05em}{\smalldisc} = x - B[x; \varepsilon] = B[0; \varepsilon] \). As a result, the convolution \( E_\varepsilon\) does not depend on the value of \(x\); i.e., it is constant. At this point, let us recall the following important indefinite integral:
\begin{equation}\label{eq:Int_rlogr}
	\int r \log r \dif r = \frac{1}{4} r^2 (\log r^2 - 1) + C.
\end{equation}
The integral follows directly from integration by parts. Here, and from now on, \(C\) represents an arbitrary constant. Now, a pass through polar coordinates and a quick use of \cref{eq:Int_rlogr} reveals
\begin{equation}\label{eq:Constant_Conv}
	E_\varepsilon(x) =
	\frac{1}{2\pi} \int\limits_{ x - \smalldisc } \log |y| \dif y 
	= 
	\frac{1}{2\pi} \int\limits_{0}^{2\pi} \int\limits_{0}^\varepsilon r \log r \dif r \dif \theta
	=
	\frac{1}{4} \varepsilon^2 (\log \varepsilon^2 - 1).
\end{equation}

Observe that when \(\varepsilon \leq \sqrt{\sfrac 1 2} \), then most of the surface defined by the scalar field \( E_\varepsilon\) is constant in \(\disc\). In fact, we will see in the next section that the full convolution is bounded by the absolute value of \cref{eq:Constant_Conv}.

\section{Partial overlap}\label{sec:Overlap}

The case \( \|x\| \in (1-\varepsilon, 1+\varepsilon] \) determines a two-disc intersection with two extreme points since \( x - \raisebox{.05em}{\smalldisc} = B[0;\varepsilon] \cap B[x;1] \). The resulting area resembles a centred ball \(B[0;\varepsilon]$ with a flattened side. As a result of this observation and the fact that \(K\) maintains a constant sign inside \(\disc\), we obtain the following result:

\begin{lemma}\label{lem:bound}
Let \( \varepsilon < 1\). For any \( x \in B[0;1+\varepsilon]\), it holds that
	\begin{equation}\label{lem:bound_eq}
		\big| E_\varepsilon (x) \big| \leq \frac{1}{4} \varepsilon^2 (1 - \log \varepsilon^2).
	\end{equation}
\end{lemma}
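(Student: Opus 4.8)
The plan is to reduce the estimate to an integral over the small ball $B[0;\varepsilon]$ and then exploit the sign of the kernel there. First I would use the translation identity already recorded at the start of \cref{sec:Overlap}, namely $x - \raisebox{.05em}{\smalldisc} = B[0;\varepsilon] \cap B[x;1]$, together with the change of variables $z = x - y$, to write
\[
	E_\varepsilon(x) = \frac{1}{2\pi}\int\limits_{B[0;\varepsilon]\cap B[x;1]} \log |z| \dif z.
\]
The key geometric observation is that, whatever the value of $x$, the domain of integration is contained in $B[0;\varepsilon]$.

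Next I would invoke the hypothesis $\varepsilon < 1$. On $B[0;\varepsilon]$ we have $|z| \leq \varepsilon < 1$, so $\log|z| \leq 0$; the integrand is therefore non-positive and $E_\varepsilon(x) \leq 0$. This is precisely where the assumption $\varepsilon<1$ enters, since it fixes the sign of $K$ inside the $\varepsilon$-ball. Taking absolute values recasts the estimate as one for a non-negative integrand,
\[
	\big| E_\varepsilon(x) \big| = \frac{1}{2\pi}\int\limits_{B[0;\varepsilon]\cap B[x;1]} \big(-\log |z|\big) \dif z.
\]

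Since $-\log|z| \geq 0$ on $B[0;\varepsilon]$, I would then enlarge the domain from $B[0;\varepsilon]\cap B[x;1]$ to all of $B[0;\varepsilon]$ by monotonicity of the integral, thereby discarding the dependence on $x$. A pass to polar coordinates followed by \cref{eq:Int_rlogr}, which is exactly the computation already carried out in \cref{eq:Constant_Conv}, evaluates the enlarged integral to $\tfrac14 \varepsilon^2 (1-\log \varepsilon^2)$, establishing the claimed bound.

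There is essentially no hard step here; the only points requiring care are the local integrability of $\log|z|$ near the origin, which guarantees convergence and is already granted by $K \in W_{\text{loc}}^{1,1}(\R^2)$, and the ordering of the two reductions: one must pass to absolute values \emph{before} enlarging the domain, since the monotonicity argument needs an integrand of constant sign. As a sanity check, the bound is attained with equality precisely in the nested regime $a \leq 1-\varepsilon$, where $B[0;\varepsilon]\cap B[x;1] = B[0;\varepsilon]$, in agreement with \cref{eq:Constant_Conv}.
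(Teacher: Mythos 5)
Your proposal is correct and follows essentially the same route as the paper: both rest on the containment of the translated region in \(B[0;\varepsilon]\), the constant (negative) sign of the kernel there, monotonicity of the integral, and the polar evaluation already recorded in \cref{eq:Constant_Conv}. The only cosmetic difference is that you treat the nested and partial-overlap cases uniformly via the inclusion, whereas the paper splits them into the equality case and the strict-subset case.
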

\begin{proof}
	Notice that the result is trivial for the nested case since \cref{lem:bound_eq} is nothing else than the absolute value of \cref{eq:Constant_Conv}. 
	For the partial overlap, we have that \( x - \raisebox{.05em}{\smalldisc} \subsetneq B[0;\varepsilon]\), then \(K\) is negative for any \( y \in x - \raisebox{.05em}{\smalldisc}\). Thus, the result follows from
	\[
		\int\limits_{x - \smalldisc} |K|(y)  \dif y
		< \int\limits_{\mathclap{{B_2[0;\varepsilon]}}} -K(u) \dif u,
		\qedhere
	\]
\end{proof}

\subsection{Disc points}\label{sec:Overlap_Disc_inside}

\begin{wrapfigure}[7]{r}{0.25\textwidth} 
    \centering
    \vspace{-2.25em}
    \captionsetup[subfloat]{format=hang,singlelinecheck=false}
    \includegraphics[scale=0.33]{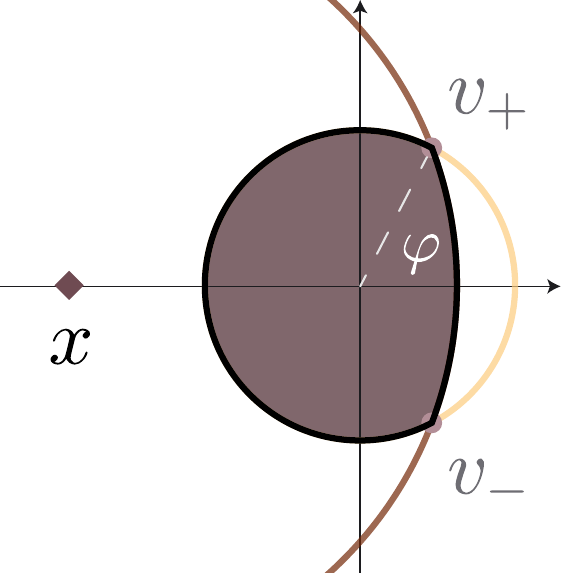}
    \caption{Description of $\circleddash$.}
    \label{fig:RegionRepresentation}
\end{wrapfigure}

In this subsection, we will focus on evaluating \( E_\varepsilon \) for points \( x \in \disc\). Observe that the form of \( x - \raisebox{.05em}{\smalldisc} \) and the radial nature of \(K\) reveal that \( E_\varepsilon \) is radially symmetric. Thus, let us define \( a \coloneqq \|x\| \) and assume without loss of generality that \( x = (-a,0) \).

As anticipated, the region \( \circleddash \coloneqq x - \raisebox{.05em}{\smalldisc} \) is characterised by its extreme points determined by the intersection between two circles, one centred at the origin with radius \(\varepsilon\), and the other centred at \(x\) with unit radius. Letting \( u \coloneqq a^{-1} x \) be the unit vector of \(x\) and \(u_\perp \coloneqq (u_2, -u_1) \) a perpendicular vector to \(u\), we define the quantities \( t \coloneqq \sfrac{ (a^2 + \varepsilon - 1) }{(2a)} \) and \( h \coloneqq \sqrt{\varepsilon^2 - t^2}\). Then the points of intersection \( v_+\) and \(v_-\) can be described by the set
\begin{equation}\label{eq:v_pm}
	v_{\pm} = \cbr{ t u \pm h u_\perp }
	= \cbr{
	\frac{1}{2a}
	\begin{pmatrix}
		1-a^2-\varepsilon^2 ,
		&
		\pm \sqrt{ \big((a+\varepsilon)^2-1\big) \big(1-(a-\varepsilon)^2\big) }
	\end{pmatrix} 
	}.
\end{equation}
Observe that \(v_+\) has a positive vertical component. We denote \(\varphi\) as the angle of \(v_+\) which is given by the expression
\begin{equation}\label{eq:angle_phi}
    \varphi = \arccos -\frac{t}{\varepsilon}  = \operatorname{arccos} { \frac{1 - a^2 - \varepsilon^2}{2a\varepsilon} }.
\end{equation}
A representation of this construction is depicted in \cref{fig:RegionRepresentation}. Here, we observe the effect of the translation by \(x\) of the intersection \( \raisebox{.05em}{\smalldisc}\) and the presence of two extreme points, or \emph{vertices} described in \cref{eq:v_pm}. The angle \cref{eq:angle_phi} is also highlighted.

The previous construction allows us to reduce the integral over \(\circleddash\) to twice the integral over the region
\(
	\rotatebox[origin=c]{-90}{\LEFTcircle} \coloneqq \circleddash \cap \R \times \R_{\geq 0},
\)
which has the following polar description:
\begin{equation}
	\rotatebox[origin=c]{-90}{\LEFTcircle}
	\coloneqq
	\big\{ (r,\theta): \, r \in [0,\varepsilon] \wedge \theta \in [\varphi,\pi] \big\} 
	\cup 
	\big\{ (r,\theta): \, r \in \cbr{0, s(\theta) } \wedge \theta \in [0,\varphi] \big\}.
\end{equation}
The radius $s(\theta) \coloneqq -a \cos(\theta) + \sqrt{1 - a^2 \sin^2(\theta)}$ comes from the polar representation of the constraint $\|v - x\|^2 = r^2 - 2 a r \cos(\theta - \psi) + a^2 = 1$ which determines the maximum radius in the flattened area, and $\psi$ is the angle of $x$, which in this case corresponds to $\psi = \pi$.

As a result, we can write 
\begin{align}
	\int\limits_{x - \smalldisc} K(y)  \dif y 
	&= 2 \int\limits_{ \rotatebox[origin=c]{-90}{\LEFTcircle} } K(y)  \dif y 
	= 
	2 \int\limits_\varphi^\pi \int\limits_0^\varepsilon r k(r) \dif r \dif \theta
	+
	2 \int\limits_0^\varphi \int\limits_0^{s(\theta)} r k(r) \dif r \dif \theta,
	\notag
	\\
	&=
	\frac{1}{4\pi} (\pi - \varphi) \varepsilon^2 ( \log \varepsilon^2 - 1) + 2 F_\varepsilon (a,\varphi),
	\label{eq:Polar_Int_a}
\end{align}
where the first term in \cref{eq:Polar_Int_a} follows from \cref{eq:Int_rlogr}, whereas\footnote{Recall that we are working on the branch \( a \leq 1\).} \( F_\varepsilon (a,\varphi) = \int\limits_0^\varphi \int\limits_0^{s(\theta)} r k(r) \dif r \dif \theta \). We will now focus our attention on determining an exact formula for \(F_\varepsilon\).

\subsubsection{\(F_\varepsilon\) by a change in the order of integration}\label{ssec:A_ChangeOrder}

In the next subsection, we will prove the following statement:
\begin{lemma}\label{lem:H_a_phi}
\begin{subequations}\label{seq:F_A}
	The integral \(F_\varepsilon (a,\varphi)\) admits a closed-form representation
	\begin{equation}\label{eq:F_by_h_1}
		F_\varepsilon (a,\varphi) = \frac{\varphi}{8\pi} s^2(\varphi) \big( \log s^2(\varphi) - 1\big) - h_1(a;r) \Big|_{r=1-a}^{r=s(\varphi)} \,.
	\end{equation}
	Here, the auxiliary function \(h_1\) is defined in terms of
	\begin{align}
		h_1(a; r) 
		&= \frac{1}{8\pi} \sbr{ r^2 (\log r^2 -1) \operatorname{arccos} \ell(a;r)  +  h_2\big(a; \chi(a;r) \big) },
		\label{eq:h_1_in_h_2}
		\\
		\ell(a;r) &= \dfrac{1- a^2 - r^2}{2 a r},
		\qquad
		\chi(a;r) = \arccos \dfrac{1 + a^2 - r^2}{2 a},
		\label{eq:Handles_for_F_by_h_1}
		\\
		h_2(a; \phi) 
		&= 
		2 \Im \big( \Li ( a e^{i \phi} ) \big)
			+ (1 - a^2)\del{ \phi + \arg\del{ 1 - a e^{i \phi}} }
			 +2a \del{ \log |1-ae^{i \phi}| - 1} \sin \phi.
		\label{eq:h_2_compact}
	\end{align}
\end{subequations}
\end{lemma}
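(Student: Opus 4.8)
The plan is to start from the iterated integral in \cref{eq:Polar_Int_a}, namely $F_\varepsilon(a,\varphi) = \frac{1}{2\pi}\int_0^\varphi\int_0^{s(\theta)} r\log r \dif r\dif\theta$ (since $r\,k(r)=\frac{1}{2\pi}r\log r$), and, as the subsection title advertises, swap the order of integration. First I would record that for $a\le 1$ the radius $s$ is strictly increasing on $[0,\pi]$: differentiating $s(\theta)=-a\cos\theta+\sqrt{1-a^2\sin^2\theta}$ gives $s'(\theta)=a\sin\theta\del{1-a\cos\theta/\sqrt{1-a^2\sin^2\theta}}$, and $\sqrt{1-a^2\sin^2\theta}\ge a\cos\theta$ because their squares differ by $1-a^2\ge 0$. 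With $s(0)=1-a$ and $s(\varphi)=\varepsilon$, the planar region $\cbr{0\le\theta\le\varphi,\ 0\le r\le s(\theta)}$ is re-expressed with $r$ outermost: for $r\in[0,1-a]$ the full wedge $\theta\in[0,\varphi]$ is admissible, whereas for $r\in[1-a,s(\varphi)]$ the constraint $r\le s(\theta)$ reads $\cos\theta\le\ell(a;r)$, i.e. $\theta\in[\arccos\ell(a;r),\varphi]$, with $\ell$ as in \cref{eq:Handles_for_F_by_h_1}.

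Carrying out the inner, now $\theta$-independent, integration splits $F_\varepsilon$ into $\frac{\varphi}{2\pi}\int_0^{s(\varphi)}r\log r\dif r-\frac{1}{2\pi}\int_{1-a}^{s(\varphi)}\arccos\ell(a;r)\,r\log r\dif r$. The first piece evaluates via \cref{eq:Int_rlogr} to the leading term $\frac{\varphi}{8\pi}s^2(\varphi)\del{\log s^2(\varphi)-1}$ of \cref{eq:F_by_h_1}, so it remains to produce an antiderivative $h_1$ of $\frac{1}{2\pi}\arccos\ell(a;r)\,r\log r$. I would integrate by parts with $v=\frac14 r^2\del{\log r^2-1}$ (the antiderivative of $r\log r$ from \cref{eq:Int_rlogr}) and $u=\arccos\ell(a;r)$; the boundary term is precisely the first summand $\frac{1}{8\pi}r^2(\log r^2-1)\arccos\ell(a;r)$ of \cref{eq:h_1_in_h_2}, leaving $-\frac{1}{2\pi}\int v\,\frac{\dif}{\dif r}\arccos\ell(a;r)\dif r$, which must be identified with $\frac{1}{8\pi}h_2\del{a;\chi(a;r)}$.

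The decisive simplification is a change of variable to the triangle angle $\chi$ of \cref{eq:Handles_for_F_by_h_1}. I would first show $\sqrt{1-\ell^2}=\sin\chi/r$: substituting $r^2=1+a^2-2a\cos\chi$ (the law of cosines encoded in $\chi(a;r)=\arccos\frac{1+a^2-r^2}{2a}$) into $\ell=\frac{1-a^2-r^2}{2ar}$ yields $r^2-(a-\cos\chi)^2=\sin^2\chi$, whence $\frac{\dif}{\dif r}\arccos\ell=\frac{1-a^2+r^2}{2ar\sin\chi}$. Using $r\dif r=a\sin\chi\dif\chi$ the radical cancels, and since $1-a^2+r^2=2(1-a\cos\chi)$ and $\log r^2=2\log|1-ae^{i\chi}|$, the remaining integral collapses to $\frac14\int(1-a\cos\chi)\del{2\log|1-ae^{i\chi}|-1}\dif\chi$, which defines $\frac{1}{8\pi}h_2$.

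The hard part will be the closed-form evaluation of this last $\chi$-integral, i.e. producing \cref{eq:h_2_compact}. The elementary piece $\int(1-a\cos\chi)\dif\chi$ is immediate, but the weighted logarithm $\int(1-a\cos\chi)\log|1-ae^{i\chi}|\dif\chi$ is where the dilogarithm enters. I would write $\log|1-ae^{i\chi}|=\Re\log(1-ae^{i\chi})$ and exploit the identity $\frac{\dif}{\dif\chi}\Im\del{\Li(ae^{i\chi})}=-\log|1-ae^{i\chi}|$ (a consequence of $\Li'(z)=-\log(1-z)/z$), together with a further integration by parts, to generate the $\Im\del{\Li(ae^{i\chi})}$, $\arg(1-ae^{i\chi})$ and $\sin\chi\,\log|1-ae^{i\chi}|$ contributions. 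I expect the genuine obstacle to be bookkeeping rather than conceptual: one must fix the principal branches of $\Li$ and of $\arg(1-ae^{i\chi})$ consistently, track the sign of the $\arg$-term with care (it is the easiest place to slip), and then verify continuity of $h_2$ on the relevant range while confirming that the endpoint substitutions $r=1-a$ (where $\arccos\ell=0$ and $\chi=0$) and $r=s(\varphi)=\varepsilon$ (where $\arccos\ell=\varphi$) return the advertised boundary values; continuity of $h_2$ then transfers to $F_\varepsilon$.
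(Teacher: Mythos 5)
Your proposal follows essentially the same route as the paper: reverse the order of integration with the split at $r=1-a$ (where $\arccos\ell$ saturates), integrate by parts against the antiderivative of $r\log r$, pass to the law-of-cosines angle $\chi$ so the radical cancels, and evaluate the resulting $\int(1-a\cos\chi)\bigl(\log(1+a^2-2a\cos\chi)-1\bigr)\dif\chi$ via the dilogarithm identity $\tfrac{\dif}{\dif\phi}\Li(ae^{i\phi})=-i\log(1-ae^{i\phi})$. The only part you leave as a sketch --- the explicit primitive of $(a\cos\chi)\log(1+a^2-2a\cos\chi)$ via the complex form of cosine, which produces the $\arg$ and $\sin\chi$ terms of \cref{eq:h_2_compact} --- is exactly how the paper finishes, so the plan is correct and complete in outline.
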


We present the proof of Lemma \ref{lem:H_a_phi} by splitting the result into three parts. Numerical tests are presented in the accompanying repository\footnote{Relevant notebook in code repository: \texttt{\color{xkcdOceanBlue}2 - Convolution inside disc.ipynb}. Here, we
\textbf{(a)} numerically check formula \cref{eq:Constant_Conv};
\textbf{(b)} symbolically derive the relation between the dilogarithm and the log-cosine integrand using SymPy;
\textbf{(c)} numerically check \cref{eq:h_2_a} and \cref{eq:h_2_b} against a quadrature implementation; and
\textbf{(d)} implement \cref{eq:F_by_h_1} and check it against three different quadrature methods.
}.

\paragraph{Integration order:}
First, let us derive \cref{eq:F_by_h_1} by an adequate change in the order of integration. Recall \(F_\varepsilon\) from \cref{eq:Polar_Int_a}, then, for a fixed \(r>0\), the condition \(r \leq s\) is equivalent to \( r + a \cos \theta \leq \sqrt{ 1 - a^2 (1 - \cos^2\theta) }\). Squaring both sides, we obtain that \( r^2 + 2 a r \cos (\theta) \leq 1-a^2  \) and 
\( \cos \theta \leq \ell(a;r) \).

Since \(a>0\) and \(r \geq 0\), we have that \(\ell(a;r) \geq -\sfrac{r}{a}\), thus for each \(r\), the admissible \(\theta\) lie within \( \cos(\theta) \leq \ell(r) \). As the cosine function decreases on \([0,\pi]\) and \(\varphi < \pi\), we obtain the admissible interval
\( \sbr{ T(r), \varphi}\) for \(\theta\), where
\[
    T(r) = 
    \begin{cases}
        0 & \text{if } \ell(r) > 1,
        \\
        \arccos \ell(a;r), & \text{if } \cos(\varphi) \leq \ell(a;r) \leq 1.
    \end{cases}
\]
The threshold $\ell(a;r) > 1$ is equivalent to $r < 1-a$. In contrast, $r$ varies from $0$ to the top of the region determined by $s(\varphi)$, as the function $s$ is monotone increasing in \([0,\pi]\).
We thus obtain
\begin{align}
	F_\varepsilon (a,\varphi) 
	&= \int\limits_0^{s(\varphi)} \int\limits_{T(r)}^\varphi  r k(r) \dif \theta  \dif r 
	= \int\limits_0^{s(\varphi)}  \del{ \varphi - T(r)}  r k(r) \dif \theta  \dif r 
	= \int\limits_0^{s(\varphi)} \varphi r k(r) \dif r - h_1 (a;r) \Big|_{r=1-a}^{r=s(\varphi)} \,.
	\label{eq:H_a_phi}
\end{align}
The first term at the right-hand side of \cref{eq:H_a_phi} follows once again from \cref{eq:Int_rlogr}. For the second term, we define
\begin{equation}\label{eq:h_1}
	h_1 (a;r) \coloneqq \frac{1}{2\pi}
	\int \arccos\left( \ell(a;r) \right) r \log r \dif r,
\end{equation}
and thus we have shown the characterisation \cref{eq:F_by_h_1}.

\paragraph{First primitive:} 

Now we focus on deriving an expression for the exact indefinite integral \cref{eq:h_1}. This is possible by an adequate transformation which will yield \cref{eq:h_1_in_h_2}.
In this context, let us define the change of variables \( u \equiv \arccos \ell(a;r) \) followed by \(\dif v \equiv r \log r\); these yield
\[
    h_1(a;r) = \frac{1}{2\pi} \left[ \frac{1}{4}r^2 (\log r^2 -1) \operatorname{arccos} \ell(a;r) 
    + \frac{1}{4} \int  r^2 (\log r^2 -1) \frac{ \partial_r \ell (a;r) }{\sqrt{1 - \ell^2(a;r)}} \dif r  
    \right].
\]
Here, we have that 
\[
	\partial_r \ell(a;r) = \frac{(a^2 - r^2 - 1)}{(2ar^2)} 
	\qquad\text{and}\qquad
	\sqrt{1 - \ell^2(a;r)} = \frac{1}{2ar} \sqrt{ \del{ (a+1)^2 - r^2 } \del{ r^2 - (1-a)^2 } }.
\]
Introducing the aditional change of variables \(\phi \equiv \arccos \sfrac{(1 + a^2 - r^2)}{(2a)}\)  which maps the range $[1-a,1+a]$ to $[0,\pi]$ through the relationship $r^2 \equiv 1 + a^2 - 2a \cos \phi$, we obtain, through implicit differentiation, that $ r \dif r \equiv a \sin(\phi) \dif \phi$. Moreover,
\(
    ((a+1)^2 - r^2) (r^2 - (1-a)^2) 
    \equiv 
    4 a^2  \sin^2(\phi)
\)
and the nonnegativity of sine over \([0,\pi]\) yields that \( 2ar \sqrt{1 - \ell^2(r)} \equiv 2 a \sin \phi\). Thence:
\begin{align}
    h_2(a;\phi) &\coloneqq \int  r^2 (\log r^2 -1) \frac{ \partial_r \ell(a;r) }{\sqrt{1 - \ell^2(a;r)}} \dif r   \bigg|_{ r^2 = 1 + a^2 - 2a \cos \phi}
    \notag
    \\
    &=
    -\int (1 - a \cos \phi) \big( \log(1 + a^2 - 2a \cos\phi ) -1  \big) \dif \phi.
    \label{eq:h_2}
\end{align}
Here we have unveiled \cref{eq:h_1_in_h_2} and \cref{eq:Handles_for_F_by_h_1}, where \(\chi\) results from evaluating the change of variables that gave us \(\phi\).

\paragraph{Second primitive:} 

Let us expand and exactly integrate each term in \cref{eq:h_2}. First, notice that \(\log(1+ a^2 - 2a \cos \phi) = \log |1 - a e^{i\phi}|^2 = \log(1-a e^{i\phi}) + \log(1-a e^{-i\phi})\). Now, the dilogarithm satisfies the equation \( \frac{\dif}{\dif z} \Li (z) = \sfrac{-\log(1-z)}{z}\) for any complex \(z\in \mathbb{C}\). Then, the chain rule allows us to write
\[
    \frac{\partial }{\partial \phi} \Li ( a e^{i \phi} ) 
    = \Li '(a e^{i \phi} ) \frac{\partial }{\partial \phi} (a e^{i \phi} )
    = -\frac{\log(1-a e^{i \phi})}{a e^{i \phi}} a i e^{i \phi} = -i\log(1-a e^{i \phi}).
\]
As a result, we have the primitive
\(
	\int \log(1-a e^{\pm i  \phi}) \dif \phi = \pm i \, \Li ( a e^{i \phi} ) + C
\),
and thus
\begin{equation}\label{eq:h_2_a}
	\boxed{ \int \log(1 + a^2 - 2a \cos\phi )  \dif \phi 
	= - 2 \Im \big( \Li ( a e^{i \phi} ) \big) + C.}
\end{equation}
Notice that if \(\phi = 0\), then \cref{eq:h_2_a} equals \(C\).

To compute the indefinite integral of \( (a \cos \phi) \log(1+ a^2 - 2a \cos\phi) \) can be done by means of the relation \(  \cos \phi = (\sfrac{1}{2}) (e^{i\phi} + e^{-i\phi}) \). Using the change of variable \( z \equiv a e^{i\phi} \), and the fact that \( \Im \del{ \log(1-a e^{i\phi}) } = -\operatorname{arctan} \frac{a \sin\phi}{1-a \cos\phi} \), we obtain
\begin{equation}\label{eq:h_2_b}
\boxed{
	\begin{aligned}
		\int (a \cos\phi) \log(1 + a^2 - 2a \cos\phi )  \dif \phi 
		&= 
		-a^2 \phi + (1-a^2)\arctan\left(\frac{a\sin\phi}{1-a\cos\phi}\right) 
		\\& 
		\qquad + a\big( \log(1-2a\cos\phi+a^2) - 1 \big)\sin\phi  + C.
	\end{aligned}
	}
\end{equation}
Whenever $a$ or $\phi$ are $0$, when the latter coincides with \( \chi(a,1-a)\), then \cref{eq:h_2_b} equals $C$. Similarly, if $a=1$, the integral reduces to $- \phi + \big( \log(2 - 2 \cos\phi) - 1 \big) \sin \phi$ which is continuous at $0$. 

Finally, we have that $\int a \cos \phi -1 \dif \phi = -\phi + a \sin \phi + C$. Thus, we can write $h_2$ in the form 
\begin{equation}
	\begin{aligned}
		h_2(a;\phi) &= 
		2 \Im \big( \Li ( a e^{i \phi} ) \big)
		+ (1-a^2)\left[ \phi + \arctan\left(\frac{a\sin\phi}{1-a\cos\phi}\right)\right] 
		\\
		&\qquad + a\big( \log(1-2a\cos\phi+a^2) - 2 \big)\sin\phi + C.
	\end{aligned}
\end{equation}
which is equivalent to \cref{eq:h_2_compact} where we have set the integration constant to zero.

\subsubsection{\(F_\varepsilon\) by direct integration}\label{ssec:B_Direct}

In \cref{ssec:A_ChangeOrder} we reversed the order of integration, obtaining a constructive representation of \(F_\varepsilon\) in \cref{eq:Polar_Int_a} using Lemma \ref{lem:H_a_phi}. An issue with \cref{seq:F_A} is that its construction inherently relies on the assumption that \( a \leq 1\) and thus any expression for the complementary region \( a \in (1,1+\varepsilon)\) would have to be rederived based on the cutting points of \(s\) given by \(\alpha\). A further limitation becomes evident from the behaviour of \( F_\varepsilon\) for small \(\varepsilon\): the function decreases almost quadratically in \(\varepsilon\). As a result, any numerical implementation of \cref{seq:F_A} in finite precision is susceptible to cancellation and round-off errors unless special care is taken. Indeed, preliminary numerical tests showed that these issues appear quickly\footnote{Relevant notebook in code repository: \texttt{\color{xkcdOceanBlue}3 - Direct and Error.ipynb}. Here, we
\textbf{(a)} symbolically verify \cref{eq:G_alt_a} and \cref{eq:G_alt_b};
\textbf{(b)} implement \cref{eq:H_Full} and check it against \cref{eq:F_by_h_1}, and different quadrature methods;
\textbf{(c)} visualise scaling and numerical errors.
}.

For these reasons, it is useful to seek an alternative expression for the integrand defining \( F_\varepsilon\). Particularly, we aim to find one representation that extends naturally to the entire domain and provides an asymptotic expansion suitable for highly accurate numerical evaluation. Developing such an expression is the goal of this subsection. For this task, let us notice that \cref{eq:Int_rlogr} once more allows us to obtain
\begin{equation}\label{eq:alternative_H}
	F_\varepsilon(a;\varphi) = \int\limits_0^\varphi \frac{1}{8\pi} s^2(\theta) \del{ 2 \log {s(\theta)} - 1 } \dif \theta.
\end{equation}

\begin{lemma}\label{lem:H_a_phi_2}
	The integral \(F_\varepsilon (a,\varphi)\) admits an alternative closed form 
	\begin{equation}\label{eq:H_Full}
	\begin{aligned}
		F_\varepsilon (a,\varphi) &= \frac{1}{4 \pi} \bigg[
		 \Im  \Li (z)
		 + (1-a^2) \del{ \phi - \frac{1}{2}  
		 \arg(1 - z) }
		+ a \big( 1- \log|1 - z|  \big)\sin 2\phi \bigg] 
		\Bigg|_{\phi = \Phi(\varphi) } \hspace{-0.5em} - \frac{1}{8} (1-a^2),
	\end{aligned}
	\end{equation}
	where \( z \coloneqq -a e^{2i\phi} \) and \( \Phi(\theta) = \frac{1}{2} \arccos \del{ \frac{s^2(\theta) - 1- a^2 }{2a} } \).
\end{lemma}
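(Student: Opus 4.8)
The plan is to evaluate the one-dimensional integral \cref{eq:alternative_H} directly by a change of variables that trades the polar angle $\theta$ (measured from the origin) for the half-angle $\phi = \Phi(\theta)$, which parametrises the boundary arc by its position on the unit circle centred at $x$. First I would record the geometric identity underlying the definition of $\Phi$: since the boundary point $v$ at polar angle $\theta$ satisfies $\|v-x\|=1$ with $x=(-a,0)$, combining the polar constraint $s^2 = 1-a^2-2as\cos\theta$ with the defining relation $\cos 2\phi = (s^2-1-a^2)/(2a)$ gives $v-x = (-\cos 2\phi,\ \sin 2\phi)$; as a complex number this reads $v = -(a + e^{-2i\phi})$. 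In particular $s^2 = 1+a^2+2a\cos 2\phi = |1-z|^2$ with $z \coloneqq -ae^{2i\phi}$, so $\log s = \log|1-z|$, and on the branch $a\leq 1$ the map $\phi=\Phi(\theta)$ is a monotone bijection with $\Phi(0)=\tfrac{\pi}{2}$ and lower value $\Phi(\varphi)$ as in the statement.

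The decisive step is the Jacobian. Writing $\theta = \pi + \arg(a+e^{-2i\phi})$ and differentiating through the complex logarithm yields
\[
    \frac{d\theta}{d\phi} = \Im\frac{-2i}{1-z} = \frac{-2(1+a\cos 2\phi)}{s^2}.
\]
The factor $s^2$ cancels exactly against the $s^2$ sitting in the integrand of \cref{eq:alternative_H} — this cancellation is the entire reason for choosing $\Phi$ — so that the integral collapses, after flipping the limits, to
\[
    F_\varepsilon(a,\varphi) = \frac{1}{4\pi}\int_{\Phi(\varphi)}^{\pi/2} (1+a\cos 2\phi)\big(\log(1+a^2+2a\cos 2\phi) - 1\big)\,d\phi,
\]
a genuinely elementary integral with no surviving square roots.

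I would then integrate term by term, recycling the primitives already derived in \cref{ssec:A_ChangeOrder}. Splitting the integrand into $\log(1+a^2+2a\cos 2\phi)$, the weighted term $a\cos 2\phi\,\log(1+a^2+2a\cos 2\phi)$, and the polynomial remainder $-(1+a\cos 2\phi)$, each piece is produced by \cref{eq:h_2_a} and \cref{eq:h_2_b} under the substitution $a\mapsto -a$, $\phi\mapsto 2\phi$, which sends $1+a^2-2a\cos(\cdot)$ to $1+a^2+2a\cos 2\phi$ and $ae^{i(\cdot)}$ to $z$. Using $\arctan\!\big(\tfrac{a\sin 2\phi}{1+a\cos 2\phi}\big)=\arg(1-z)$ and $\log(1+a^2+2a\cos 2\phi)=2\log|1-z|$ to recombine, the accumulated antiderivative equals the negative of the bracketed expression in \cref{eq:H_Full}.

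Finally I would evaluate at the two endpoints. The upper limit $\phi=\tfrac{\pi}{2}$ is exactly where the constant $-\tfrac18(1-a^2)$ originates: there $z=a$ is real, so $\Im\Li(a)=0$, $\sin 2\phi=0$, and $\arg(1-a)=0$, leaving only the $(1-a^2)\cdot\tfrac{\pi}{2}$ contribution from the $\phi$-term, whose weight is precisely $\tfrac{1}{4\pi}\cdot\tfrac{\pi}{2}(1-a^2)=\tfrac18(1-a^2)$; the lower limit $\phi=\Phi(\varphi)$ reproduces the bracket evaluated at $\Phi(\varphi)$, giving \cref{eq:H_Full}. The hard part will not be any single computation but the branch bookkeeping: one must check that $\Phi$ is the correct monotone inverse on $[\Phi(\varphi),\tfrac\pi2]$ (so that $\sin 2\phi\geq 0$ throughout, matching $v_2\geq0$), and that $\arg(1-z)$, $\log|1-z|$, and $\Li$ are taken on the continuous branches consistent with those fixed in \cref{eq:h_2_a,eq:h_2_b}, which is what legitimises the endpoint simplification $\Im\Li(a)=0$.
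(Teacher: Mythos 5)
Your proposal is correct and follows essentially the same route as the paper: both reduce \cref{eq:alternative_H} to the elementary integral $\int(1+a\cos 2\phi)\bigl(\log(1+a^2+2a\cos 2\phi)-1\bigr)\dif\phi$ via the substitution $\phi=\Phi(\theta)$, integrate it with the dilogarithm primitives \cref{eq:G_alt_a} and \cref{eq:G_alt_b}, and evaluate between $\Phi(\varphi)$ and $\Phi(0)=\sfrac{\pi}{2}$ to produce the constant $-\tfrac18(1-a^2)$. The only difference is organisational: you obtain the Jacobian in a single step from the complex parametrisation $v=-(a+e^{-2i\phi})$, yielding $\dif\theta=-2(1+a\cos 2\phi)\,s^{-2}\dif\phi$ directly, whereas the paper reaches the same reduced integral through the chain of substitutions $u=s(\theta)$, $t=u^2$, $t=1+a^2+2a\cos 2\phi$.
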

\begin{proof}
We will express \cref{eq:H_Full} as the difference of two evaluations of the following indefinite integral under a suitable change of variables:
\begin{equation}\label{eq:alternative_H_indefinite}
	M(a;\theta) \coloneqq \int s^2(\theta) \del{ 2 \log {s(\theta)} - 1 } \dif \theta.
\end{equation}
If we introduce the change of variables \( u \equiv s(\theta) \) which is guaranteed by the monotonicity of \(s\), we obtain that
\begin{align*}
	\frac{\dif u}{\dif \theta} &
	= \frac{a u \sin\theta}{ u + a \cos\theta},
	\qquad
	\cos \theta = \frac{ 1-a^2 - u^2 }{2au},
	\qquad\text{and}\qquad
	\sqrt{ 1- a^2 \sin^2 \theta } = \frac{ 1 + u^2 - a^2 }{2u}.
\end{align*}
Given that \(\varphi \in (0,\pi]\), we can assert that \( \sin\theta = \sqrt{1 - \cos^2\theta} \) and thus
\[
	\dif \theta \equiv \frac{u+a\cos \theta}{a u \sin\theta} = \frac{1 + u^2 - a^2}{u \sqrt{ ( (1+a)^2 - u^2 ) (u^2 - (1-a)^2) } } \dif u.
\]
Employing the additional change of variables \( t \equiv u^2\), allows us to write \cref{eq:alternative_H_indefinite} as
\begin{equation}\label{eq:alternative_H_indefinite_b}
	M(a;\theta)
	= \frac{1}{2}
	\int \del{\log t - 1 } \frac{1 + t - a^2}{ \sqrt{ ( (1+a)^2 - t ) (t - (1-a)^2) } } \dif t
	\Bigg|_{t = s^2(\theta) } 
	.
\end{equation}
Observe the natural range \( t \in \sbr{ (1-a)^2, (1+a)^2 }\).

Now let us introduce the last change of variables \( t \equiv (1+a)^2 \cos^2\phi + (1-a)^2 \sin^2\phi =  1 + a^2 + 2a \cos 2\phi \), which yields \( \dif t \equiv -4a \sin 2\phi \dif \phi\). Similarly, \( (1+a)^2 - t \equiv 4a \sin^2 \phi\), \( t - (1-a)^2 \equiv 4a \cos^2 \phi\), and 
\( t+1-a^2 \equiv 2(1+a \cos 2\phi ) \). As a result, we can write \cref{eq:alternative_H_indefinite_b} as the composite function
\begin{equation}\label{eq:alternative_H_indefinite_c}
	G(a;\phi)
	\coloneqq
	-2 \int (1+a \cos 2\phi ) \del{\log ( 1 +  a^2 + 2a \cos 2\phi ) - 1 } \dif \phi,
\end{equation}
where \( G(a;\phi) = M(a;\theta) \) under the relation \( \phi = \Phi(\theta) \coloneqq \frac{1}{2} \arccos L(\theta)\), 
where \( L(\theta) \coloneqq \frac{s^2(\theta) - 1- a^2 }{2a} \).

At this point, notice the resemblance to \cref{eq:h_2}. Indeed, it is possible to compute \cref{eq:alternative_H_indefinite_c} using a similar procedure. First, observing that \( \log (1+ a^2 +2 a \cos 2\phi) = \log |1 + a e^{2i\phi}|^2 = \log(1 + ae^{2i\phi}) + \log(1 + ae^{-2i\phi}) \) and following the procedure we used to obtain \cref{eq:h_2_a}, we arrive at
\begin{equation}\label{eq:G_alt_a}
	\boxed{
	\int \log (1+ a^2 +2 a \cos 2\phi) \dif \phi = -\Im \big( \Li ( -a e^{2i \phi} ) \big) + C.
	}
\end{equation}
Similarly, using the complex representation of cosine, we obtain
\begin{equation}\label{eq:G_alt_b}
	\boxed{
	\begin{aligned}
	\int (a \cos 2\phi) \log (1+ a^2 +2 a \cos 2\phi) \dif \phi 
	&= 
		\frac{1}{2} \bigg[ 2 a^2 \phi + (1-a^2)\arctan\left(\frac{ a \sin 2\phi}{1 + a \cos 2\phi} \right) 
		\\&\quad
		+ a\big( \log(1+ a^2 +2a\cos 2\phi) - 1 \big)\sin 2\phi \bigg] + C.
	\end{aligned}
	}
\end{equation}
Finally, we have that \( \int (1+a \cos 2\phi) \dif \phi = \phi + \frac{a}{2} \sin 2\phi + C\). 

Collecting terms, we arrive at
\begin{equation}\label{eq:G_a_phi_theta}
\begin{aligned}
	G\big(a;\phi \big) 
	&=
	2 \Im \big( \Li (-a e^{2i\phi}) \big)
	+ (1-a^2) \left[ 2 \phi - \arctan\left(\frac{ a \sin 2\phi}{1 + a \cos 2\phi} \right)  \right]
	\\
	&\qquad
	+ a\big( 2- \log(1+ a^2 +2a\cos 2\phi)  \big)\sin 2\phi + C.
\end{aligned}
\end{equation}

To compute the iterated integral \cref{eq:alternative_H} as the difference between two evaluations of \(G\), the following transformations must take place:
\[
	\Big|_{\theta = 0}^{\theta = \varphi}
	\to 
	\Big|_{u = s(0)}^{u = s(\varphi)} 
	\to
	\Big|_{t = s^2(0) }^{t = s^2(\varphi)} 
	\to 
	\Big|_{\phi = \Phi(0) }^{ \phi = \Phi(\varphi) } 
\]
Replacing each value accordingly, we obtain the following two limits \( \Phi(0) = \sfrac{\pi}{2}\) and \( \phi(\varphi) = \frac{1}{2} \arccos L(\varphi) \) where 
\[
L(\varphi) =\begin{cases}
            \dfrac{ \varepsilon^2  - 1 - a^2}{ 2 a } & \text{if } a^2 \leq 1 + \varepsilon^2,
            \\
            \dfrac{ (a^2 - 1)^2 }{2 a \varepsilon^2} - \dfrac{1+a^2}{2a} & \text{if } a^2 > 1 + \varepsilon^2,
    \end{cases}
\]
and notice the special values \( L(\varphi) \big|_{a=1\pm\varepsilon} = \pm 1\). 
Moreover, at \(\sfrac{\pi}{2}\), we have 
by direct evaluation of \cref{eq:G_a_phi_theta} that
\(
	G(a; \Phi(0)) = (1-a^2) \pi + C.
\)
Thus, we obtain that \cref{eq:alternative_H} is just
\begin{equation}\label{eq:Simple_F_first_branch_G_and_Quad}
	F_\varepsilon(a;\varphi)  = \frac{1}{8\pi} \sbr{ G(a; \Phi(\varphi)) - G(a; \sfrac \pi 2) }
	= \frac{1}{8\pi} G(a; \Phi(\varphi)) - \frac{1}{8} (1-a^2),
\end{equation}
which coincides with \cref{eq:H_Full}.
\end{proof}

Observe that \cref{eq:G_a_phi_theta} is well defined for any \(a\). We will use \(G\) to further characterise the convolution values outside of \(\disc\).

\begin{remark}
	We can highlight the following reduction from \cref{eq:Intersection_Angle_Cos}:
\[
	\log\del{1+ a^2 +2a L(\varphi) } = 
	\begin{cases}
            2 \log \varepsilon & \text{if } a^2 \leq 1 + \varepsilon^2,
            \\
            2 \log \frac{a^2-1}{\varepsilon} & \text{if } a^2 > 1 + \varepsilon^2.
    \end{cases}
\]
\end{remark}

\subsection{Outer points}\label{sec:Overlap_Disc_outside}

In this subsection, we will focus on evaluating \( E_\varepsilon \) for points \( x \in \disc^{\mathsf c}\). Using the same construction as in \cref{sec:Overlap_Disc_inside}, by symmetry it suffices to consider points of the form \( x =(-a,0)\) where \( a \in [1,1+\varepsilon]\). For any points of larger magnitude, observe that \cref{eq:Newt_Pot} must be zero.

A key difference from the previous case lies in the description of the set
\(
	\rotatebox[origin=c]{-90}{\LEFTcircle} = \circleddash \cap \R \times \R_{\geq 0},
\), where recall \( \circleddash \coloneqq x - \raisebox{.05em}{\smalldisc} \). The curve \( s(\theta) = -a \cos \theta + \sqrt{ 1 - a^2 \sin^2 \theta } \) still characterises the unit circle centred at \(x\), but it is no longer defined continuously on all of \( [0,2\pi]\). Instead, \(s\) displays three disjoint intervals of continuity. Indeed, the square-root term in \(s\) becomes undefined when \( |\sin \theta| >  \sfrac{1}{a}\). Defining \( \alpha \coloneqq \arcsin \sfrac{1}{a} \in [\sfrac{\pi}{6}, \sfrac{\pi}{2}] \), then \(s\) is precisely well defined over the following intervals accompaigned by their corresponding images:
\begin{equation}\label{eq:theta_regions_s}
	\theta \in
	\begin{cases}
		[0,\alpha]
		&
		\text{covers the arc} 
		\quad
		\sbr{(1-a,0), e_- },
		\\
		[\pi - \alpha, \pi+\alpha]
		&
		\text{covers the arc} 
		\quad
		\sbr{e_+, e_- },
		\\
		[2\pi - \alpha, 2\pi] \qquad
		&
		\text{covers the arc} 
		\quad
		\sbr{e_+, (1-a,0) };
	\end{cases}
\end{equation}
and \( e_\pm \coloneqq \del{-a + \sfrac 1 a, \pm \sqrt{1 - \sfrac 1 {a^2} }} \). 

The relationship between the point \(e_{+}\) and the point \(v_{+}\) (see \cref{eq:v_pm}) determines whether \( \rotatebox[origin=c]{-90}{\LEFTcircle} \) is described by one or two of the angular subregions in \cref{eq:theta_regions_s}.  By comparing their horizontal coordinates, we obtain the following angular description of the region cut out by the disc \(B[x;1]\):
\begin{equation}\label{eq:Theta_Angular_Subregions}
	\Theta(a) \coloneqq
	\begin{dcases*}
		[\pi - \alpha, \varphi]
		& if \( a = 1 \),
		\\
		[2\pi -\alpha, 2\pi] \cup [\pi - \alpha, \varphi]
		&
		if \(1 < a^2 < 1 + \varepsilon^2\),
		\\
		[\varphi + \pi, 2\pi] 
		& if \( 1 + \varepsilon^2 \leq a^2 \leq (1+\varepsilon)^2 \).
	\end{dcases*}
\end{equation}
Employing \cref{eq:Theta_Angular_Subregions}, the following representation is revealed:
\begin{equation}
	\rotatebox[origin=c]{-90}{\LEFTcircle}
	=
	\big\{ (r,\theta): \, r \in [0,\varepsilon] \wedge \theta \in [\varphi,\pi] \big\} 
	\cup 
	\big\{ (r,\theta): \, r \in \cbr{0, s(\theta) } \wedge \theta \in \Theta(a) \big\}.
\end{equation}
The circular segments corresponding to \(\Theta(a)\) inside \( \rotatebox[origin=c]{-90}{\LEFTcircle} \) are illustrated in \cref{fig:AngularRegions_Out}. In particular, we observe that the point \(e_{+}\) determines whether additional angular intervals are required in the case \( 1 < a^{2} < 1 + \varepsilon^{2} \).

\begin{figure}[h!]
\centering
\captionsetup[subfloat]{format=hang}
	\subcaptionbox{\(a=1\)}[3cm]{
	\includegraphics[scale=0.3]{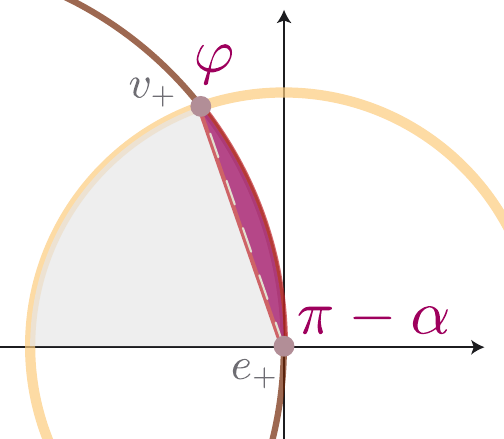}}
	\hspace{1.25cm}
    \subcaptionbox{\( 1 < a^2 < 1+\varepsilon^2 \)}[3cm]{
    \includegraphics[scale=0.3]{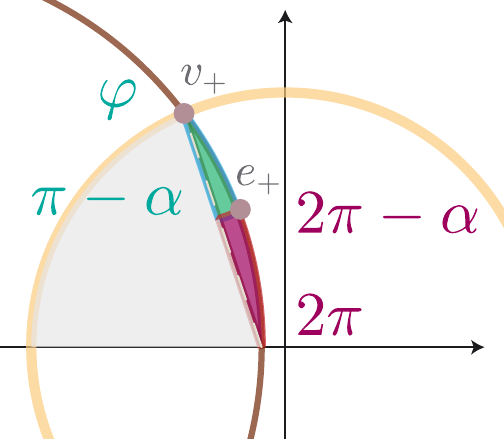}
	}
	\hspace{1.25cm}
    \subcaptionbox{\( 1+\varepsilon^2 \leq a^2 \leq (1+\varepsilon)^2 \)}[3cm]{
    \includegraphics[scale=0.3]{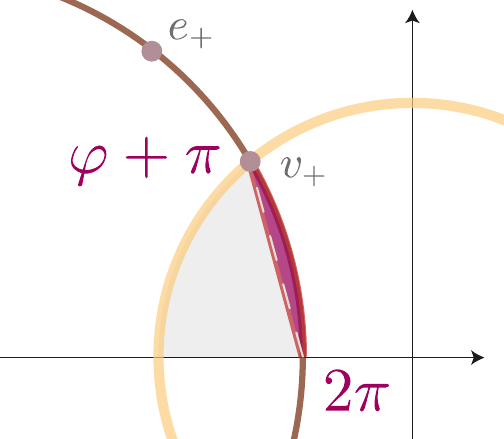}
	}
	%
	%
    \caption{Circular segments defined by \(\Theta(a)\).}
\label{fig:AngularRegions_Out}
\end{figure}

Recalling \cref{eq:Polar_Int_a} and \cref{eq:alternative_H}, we obtain
\begin{align*}
	\int\limits_{x - \smalldisc} K(y)  \dif y 
	&= 2 \int\limits_{ \rotatebox[origin=c]{-90}{\LEFTcircle} } K(y)  \dif y 
	= 
	\frac{\pi - \varphi}{4\pi} ( \log \varepsilon^2 - 1) \varepsilon^2
	+
	2 M(a) 
\end{align*}
where we define the branch of \( F_\varepsilon \) over \( [1,1+\varepsilon]\) by
\begin{equation}
\label{eq:alternative_H_general}
	F_\varepsilon (a;\varphi) \coloneqq \frac{1}{8\pi}  \int\limits_{ \Theta(a) } s^2(\theta) \del{ 2 \log {s(\theta)} - 1 } \dif \theta.
\end{equation}
Using \cref{eq:alternative_H_indefinite} and \cref{eq:G_a_phi_theta}, we may express \(F_\varepsilon\) in terms of \(G\) as
\[
	F_\varepsilon (a;\varphi) =
	\frac{1}{8\pi} 
	\begin{dcases*}
		G\del{a; \Phi(\varphi) } - G\big(a; \sfrac{\pi}{2} \big)
		& if \( a = 1 \),
		\\
		G\big(a; \Phi(\varphi) \big) + G\big(a; \sfrac \pi 2 \big) - 2 G\big(a; (\sfrac 1 4) (\pi + 2\alpha) \big)
		&
		if \(1 < a^2 < 1 + \varepsilon^2\),
		\\
		G\big(a; \sfrac \pi 2 \big) - G\big(a; \Phi(\varphi+\pi) \big)		
		& if \( 1 + \varepsilon^2 \leq a^2 \leq (1+\varepsilon)^2 \),
	\end{dcases*}
\]
where \( \Phi(\theta) = \frac{1}{2} \arccos \del{ \frac{s^2(\theta) - 1- a^2 }{2a} } \), and we have used the relations presented in \cref{tb:Relevant_Quantities_and_Values} (see \cref{App:Tables}). Applying the tabulated values from \cref{tb:Values_of_G}, we simplify \(F_\varepsilon\) to
\begin{equation}
\label{eq:Solution_M}
	F_\varepsilon (a;\varphi)  =
	\frac{1}{8\pi} 
	\begin{dcases*}
		G\del{1; \Phi(\varphi) } 
		& if \( a = 1 \),
		\\
		G\big(a; \Phi(\varphi) \big) 
		- 2\pi \ln a
		 - 2 G\big(a; (\sfrac 1 4) (\pi + 2\alpha) \big)
		&
		if \(1 < a^2 < 1 + \varepsilon^2\),
		\\
		- 2\pi \ln a
		 - G\big(a; \Phi(\varphi+\pi) \big)
		& if \( 1 + \varepsilon^2 \leq a^2 < (1+\varepsilon)^2 \),
		\\
		0 & if \( a = 1+\varepsilon\).
	\end{dcases*}
\end{equation}

\subsubsection{The branch at \(\phi = \sfrac \pi 2\) and monodromy}\label{sec:Monodromy}

In this section, we explain the value of \(G(a; \sfrac{\pi}{2})\) appearing in \cref{eq:Solution_M}. For \( a \geq 1\), the function 
\(
	f: \operatorname{dom} f \ni \theta  \longmapsto 
	s^2(\theta) \del{ \log s^2 (\theta) -1 } \in \R
\)
is well defined on its natural domain \( \operatorname{dom} f \coloneqq \cbr{ [k\pi \mp \alpha]: k \in \Z }\), where \( \alpha = \arcsin \sfrac 1 a \). 
We can write \cref{eq:G_a_phi_theta} as
\begin{equation}\label{eq:G_a_phi_arg}
\begin{aligned}
	G(a;\phi) &=
	2 \Im \Li (-a e^{2i\phi})
	+ (1-a^2) \left( 2 \phi - \arg\del{1 + a e^{2i\phi}}  \right)
	\\
	&\qquad
	+ a\big( 2- \log\del{1+ a^2 +2a\cos 2\phi}  \big)\sin 2\phi + C.
\end{aligned}
\end{equation}
For the remainder of this section, we fix \(C = 0\). At \( \phi = \sfrac{\pi}{2}\), it holds that \( \arg \del{1 + a e^{2i\phi}} = \pi\) and \(\sin 2\phi = 0\), for which \cref{eq:G_a_phi_arg} reduces to 
\begin{equation}\label{eq:G_a_pi_over_2_reduced}
	G \del{a;\frac{\pi}{2} }
	= 2 \Im \Li \del{-a e^{i\pi} }
	= 2 \Im \Li (a \pm i0).
\end{equation}
The dilogarithm \( \Li(z)\) is a multivalued analytic function with branch points at \(z=0\) and \(z=1\). Analytic continuation around these branch points produces nontrivial monodromy (characterised by the Heisenberg group): looping once around \(z=1\) changes the value of \(\Li(z)\) by specific additive terms (reflecting the discontinuities across the branch cuts) \cite{OSullivan2016a,Zagier2007a}. Thus, it remains to determine which boundary value of \(\Li\) on the cut \((1,\infty)\) is selected in \cref{eq:G_a_pi_over_2_reduced}.

Before identifying the relevant boundary value, note that in our application, \(a\) varies continuously and the turning angle \(\alpha\) satisfies \(\alpha = \sfrac{\pi}{2} + i\delta \) with\footnote{Specifically, \( \arcsin \sfrac{1}{a} = \sfrac{\pi}{2} - i \log \del{ \sfrac{1}{a} + \sqrt{ \sfrac{1}{a^2} - 1 } } \).} \(\Im \delta < 0\) for \(a < 1\), and
\(
  \alpha|_{a=1}  = \sfrac{\pi}{2}
\).
Thus, as \(a\to 1\), the endpoint \(\phi = \sfrac{\pi}{2}\) is reached \emph{from below} in the complex plane.  This continuity constraint determines the analytic branch that must be used in the evaluation of \(G(a;\phi)\).

Now, consider the path \([0,\sfrac \pi 2] \ni \phi \mapsto z(\phi) = -a e^{2i\phi}\). A direct computation shows that \( \Im z(\phi) = -a\sin 2\phi < 0 \) throughout the path. 
Therefore \(z(\phi)\) lies in the lower half-plane and approaches the point \(a>1\) on the real axis from below: 
\( \lim\limits_{ \phi \uparrow \sfrac \pi 2 } z(\phi) = a - i 0\).

The analytic continuation of \(\Li\) along this path gives
\(
	\lim\limits_{ \phi \uparrow \sfrac \pi 2 } \Li \del{-a e^{2i\phi} } = \Li(a - i0)
\). Substituting this into \eqref{eq:G_a_pi_over_2_reduced} and using the standard jump relation
\(
	\Im \Li(a \pm i0) = \pm \pi \ln a, 
\), for all \( a > 1\),
we obtain
\begin{equation}\label{eq:G_a_pi_over_2_final}
	G\Bigl(a;\frac{\pi}{2}\Bigr)
	= 2\,\Im \Li(a - i0)
	= -2\pi \ln a,\qquad a>1.
\end{equation}
This confirms the value used in \cref{eq:Solution_M}.

\section{Asymptotics and numerical treatment}\label{sec:Asymptotics}

In the regime of small \(\varepsilon\), the overlap region \(\|x\| \in [1-\varepsilon,\,1+\varepsilon]\) shrinks to a point and the quantity \(F_\varepsilon(a,\varphi)\) decays to zero as a function of \(\varepsilon\). Direct evaluation of the exact representation in \cref{eq:Fun_M} then suffers from severe cancellation and loss of precision. Thus, it is convenient to introduce a rescaled parameter that resolves the behaviour near \(a = 1\) as \(\varepsilon \downarrow 0\), and to work with a quantity that captures the essential \(\varepsilon\)-dependence in a unified way.

We reparametrise the interval \([1-\varepsilon,\,1+\varepsilon]\) by
\begin{equation}
\label{eq:Interval_Change_of_Variables}
	\lambda
	\coloneqq
	\frac{a - 1}{2\varepsilon} + \frac{1}{2},
	\qquad
	a = a(\lambda,\varepsilon)
	\coloneqq
	1 - (1-2\lambda)\varepsilon,
	\qquad
	\lambda \in [0,1].
\end{equation}
On the subinterval where \(a \leq 1\), the function \(F_\varepsilon\) admits the representation \cref{eq:Simple_F_first_branch_G_and_Quad}. This suggests introducing the reparametrised quantity
\begin{equation}\label{eq:def_H}
	H(\lambda,\varepsilon)
	\coloneqq
	\frac{1}{8\pi} \del{ G\del{a(\lambda,\varepsilon),\Phi\del{ \varphi_\varepsilon 
	\big( a(\lambda,\varepsilon) \big)
	} }
		- \pi\del{ 1-a(\lambda,\varepsilon)^{2} }
		},
	\qquad
	\lambda \in [0,1].
\end{equation}
For \(a \leq 1\) this coincides with \(F_\varepsilon(a,\varphi)\), while for \(a > 1\) it still encodes the nontrivial contribution of \(G\) that appears in the remaining branches of \cref{eq:Fun_M}.  An asymptotic expansion of \(H(\lambda,\varepsilon)\) as \(\varepsilon \to 0\) therefore yields, in a single framework, the leading-order behaviour of \(G(a,\Phi(\varphi))\), and hence of
\(F_\varepsilon\), throughout the overlap region.

The next lemma summarises the asymptotic structure of \(H\) for small \(\varepsilon\), distinguishing the two geometric regimes that arise in \cref{eq:Intersection_Angle_Cos}. A refined expansion at the special point \(a = 1\) (where \(F_\varepsilon\) vanishes cubically in \(\varepsilon\)) will be discussed subsequently.

\begin{lemma}\label{lem:H_asymptotic}
	Let \( 0 < \varepsilon \ll 1\), and \( \lambda \in [0,1]\). Parametrising \(a = 1 - (1-2\lambda)\varepsilon \in [1-\varepsilon, 1+\varepsilon]\), 
	the function \(H\) from \cref{eq:def_H} admits the expansion
	\begin{equation}
	\label{eq:Assymptotic_expansion_of_h_two_branches}
		H(\lambda,\varepsilon) =
		\begin{dcases*} 
			h_1^{(1)} (\lambda) \varepsilon^2 \log \varepsilon^2 + h_2^{(1)} (\lambda) \varepsilon^2 + O(\varepsilon^3)
			& if \( a^2 \leq 1 + \varepsilon^2\),
			\\
			h_0^{(2)}(\lambda) + h_1^{(2)}(\lambda) \varepsilon + h_2^{(2)}(\lambda) \varepsilon^2 + O(\varepsilon^3)
			& if \( a^2 > 1 + \varepsilon^2\);
		\end{dcases*}
	\end{equation}
	where, defining \( \omega_\pm = \arccos( \pm(1-2\lambda) )\), we have
	\begin{align*}
		h_1^{(1)} (\lambda) &= \frac{1}{4\pi} (1-2\lambda) \sqrt{ \lambda (1-\lambda) },
		\qquad
		h_2^{(1)}(\lambda) = \frac{1-2\lambda}{4\pi} \left[ (1-2\lambda) \omega_+ - 3\sqrt{\lambda (1-\lambda)} \right],
		\\
		h_0^{(2)} (\lambda) &= \frac{1}{4\pi} \sbr{ \Im \Li(-e^{2 i \omega_-}) + 4(2\lambda -1) \del{1 - \log(4\lambda - 2) } \sqrt{\lambda (1-\lambda)} },
		\\
		h_1^{(2)} (\lambda) &= \frac{2\lambda - 1}{4\pi} \sbr{ (\pi - 2 \omega_-) + 2(2\lambda -1) \del{1 - \log(4\lambda - 2) } \sqrt{\lambda (1-\lambda)} },
		\\
		h_2^{(2)} (\lambda) &= \frac{(2\lambda - 1)^2}{8\pi} \sbr{ \pi - (2\lambda-1) \del{1 + 2 \log(4\lambda - 2) } \sqrt{\lambda (1-\lambda)} }.
	\end{align*}
\end{lemma}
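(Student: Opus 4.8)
The plan is to push everything through the substitution \(a = a(\lambda,\varepsilon) = 1+(2\lambda-1)\varepsilon\) from \cref{eq:Interval_Change_of_Variables} and to expand the single building block \(G\bigl(a,\Phi(\varphi)\bigr)\) from \cref{eq:G_a_phi_theta}, which is all that \(H\) in \cref{eq:def_H} depends on, to order \(\varepsilon^2\) (and \(\varepsilon^2\log\varepsilon\)). First I record the elementary expansion \(1-a^2 = 2(1-2\lambda)\varepsilon - (2\lambda-1)^2\varepsilon^2\) and, using \cref{eq:Intersection_Angle_Cos}, the expansion of \(L(\varphi)=\cos 2\Phi(\varphi)\) in each geometric regime. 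In the first regime \(a^2\le 1+\varepsilon^2\) one finds \(L(\varphi) = -1 + 2\lambda(1-\lambda)\varepsilon^2 + O(\varepsilon^3)\), so that \(2\Phi\to\pi\), \(\sin 2\Phi = 2\sqrt{\lambda(1-\lambda)}\,\varepsilon + O(\varepsilon^2)\), and the dilogarithm argument runs into the branch point; in the second regime \(a^2>1+\varepsilon^2\) one finds \(L(\varphi)\to\cos 2\omega_-\), so \(2\Phi\to 2\omega_-\) and the argument stays bounded away from \(1\). From these I derive matched expansions of \(2\Phi\), \(\cos 2\Phi\), \(\sin 2\Phi\) and of the ratio \(\tfrac{a\sin 2\Phi}{1+a\cos 2\Phi}\), whose arctangent equals \(\omega_+\) up to the principal-branch shift; this is precisely how \(\omega_\pm=\arccos(\pm(1-2\lambda))\) enters.

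Next I substitute these into the three summands of \(G\). The rational and trigonometric pieces are benign: using the identity \(\log(1+a^2+2a\cos 2\Phi) = 2\log\varepsilon\) (first regime) or \(2\log\tfrac{a^2-1}{\varepsilon}\) (second regime) from the remark following \cref{lem:H_a_phi_2}, the factor \(\sin 2\Phi\) delivers the \(\log\varepsilon\) and \(\log(4\lambda-2)\) contributions directly. The delicate summand is the dilogarithm \(2\,\Im\Li(-ae^{2i\Phi})\). In the second regime I expand it by a plain Taylor series about \(z=-e^{2i\omega_-}\), which produces the \(O(1)\) term \(\Im\Li(-e^{2i\omega_-})\). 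In the first regime the argument \(z=-ae^{2i\Phi}\to 1\), so I invoke the reflection formula \(\Li(z)+\Li(1-z)=\tfrac{\pi^2}{6}-\log z\,\log(1-z)\) to isolate the logarithmic singularity; writing \(1-z = \varepsilon\,e^{i\omega_+}\bigl(1+O(\varepsilon)\bigr)\) then turns the whole term into elementary functions of \(\omega_+\), \(\log\varepsilon\) and \(\varepsilon\).

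The step I expect to be the main obstacle is the severe cancellation in the first regime: the \(O(\varepsilon)\) and \(O(\varepsilon\log\varepsilon)\) parts of \(G\) annihilate against \(\pi(1-a^2)\), so that \(H\) begins only at order \(\varepsilon^2\log\varepsilon^2\). Concretely, the \(\varepsilon\log\varepsilon\) coefficients of the dilogarithm term and of the \(a(\cdots)\sin 2\Phi\) term cancel, and the surviving \(O(\varepsilon)\) contributions sum to \(2\pi(1-2\lambda)\varepsilon\), which is exactly \(\pi(1-a^2)\) to first order. Consequently every ingredient must be carried one order beyond its naive leading term: the coefficients \(h_1^{(1)}\) and \(h_2^{(1)}\) emerge only from the second-order corrections to \(\sin 2\Phi\), to \(\arg(1-z)\) and \(|1-z|\), and to \(L(\varphi)\). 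In the second regime no such cancellation occurs — there \(H\) is the bare \(G\)-contribution and already starts at \(O(1)\) — but the dilogarithm, arctangent and logarithm must still be expanded to \(O(\varepsilon^2)\) about their limiting values at \(\Phi=\omega_-\), with care taken since \(-e^{2i\omega_-}\) returns to the branch point as \(\lambda\downarrow\tfrac12\). The final step is the routine, if lengthy, collection of coefficients, re-expressed through \(1-2\lambda\), \(\sqrt{\lambda(1-\lambda)}\) and \(\omega_\pm\), together with a separate continuity check at \(\lambda=\tfrac12\) that both regimes agree on and that motivates the refined expansion announced after the statement.
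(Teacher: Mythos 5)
Your plan reproduces the paper's proof essentially step for step: the same decomposition of \(G\) into its dilogarithm, argument/arctangent, and \(\log\cdot\sin 2\Phi\) summands under the \(\lambda\)-parametrisation, the same use of Euler's reflection formula to resolve the branch point \(z\to 1\) in the regime \(a^2\le 1+\varepsilon^2\) (via \(1-z=\varepsilon e^{i\omega_+}(1+O(\varepsilon))\)) versus a plain Taylor expansion about \(-e^{2i\omega_-}\) in the regime \(a^2>1+\varepsilon^2\), and the same key observation that the \(O(\varepsilon)\) and \(O(\varepsilon\log\varepsilon)\) contributions cancel against \(\pi(1-a^2)\), forcing every ingredient to be expanded one order further. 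The approach is correct, and carrying out the bookkeeping as described yields exactly the stated coefficients.
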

The proof is given in \cref{App:Proof_A}. Extended precision evaluations of \(H(\lambda,\varepsilon)\) and its asymptotic expansion\footnote{Relevant notebooks in repository: (1) \texttt{\color{xkcdOceanBlue}4 - Tests H - Branch 1.ipynb} and (2) \texttt{\color{xkcdOceanBlue}5 - Tests H - Branch 2.ipynb} compute \(H\) and its asymptotic approximation in high precision for \( a^2 \leq 1 + \varepsilon^2\) and \( a^2 > 1 + \varepsilon^2\), respectively. (3) \texttt{\color{xkcdOceanBlue}6 - H - High double.ipynb} provides a double-precision implementation.} yield the error curves shown in \cref{fig:Asymptotics_Error}. There we present, on a logarithmic scale, the absolute differences between \(H\) and its asymptotic expansion for the family of values \( \varepsilon \in \{10^{-2 - 4a}\}_{a \in \llb 1,7 \rrb} \). We observe that both approximations exhibit near-constant errors that remain at a fixed fraction of \(\varepsilon\). Using this observation, we observed that the maximum error for each curve behaves as \( \varepsilon \times 10^{-2}\). The accuracy of the approximation allows us to provide a further stable double-precision approximation of \(H\) by identifying \(a\) with its counterpart branch in terms of \(\lambda\). 
Consequently, the simple expansion 
\(
	\frac{1}{8} (1-a^2) = \frac{1}{4} (1-2\lambda) \varepsilon - \frac{1}{8} (1-2\lambda)^2 \varepsilon^2 
\)
allows us to recover \(G(a,\Phi(\varphi))\) from \(H\) in a numerically stable way, and hence to
obtain robust double-precision implementations of \(F_\varepsilon\) in the overlap region.

\begin{figure}[h!]
\centering
\captionsetup[subfloat]{format=hang}
	\subcaptionbox*{}[7cm]{\includegraphics[scale=0.7]{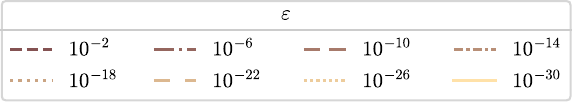}}
	\\[-0.5em]
\hspace{-1.25cm}
    \subcaptionbox{Values on the branch \( a^2 \leq 1+\varepsilon^2 \).}[6cm]{
    \includegraphics[scale=0.55]{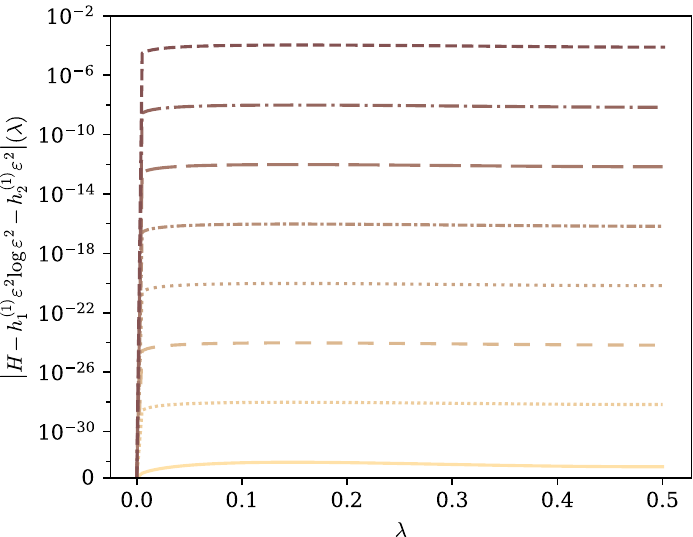}
	}
	\hspace{1.25cm}
    \subcaptionbox{Values on the branch \( a^2 > 1+\varepsilon^2 \).}[6cm]{
    \includegraphics[scale=0.55]{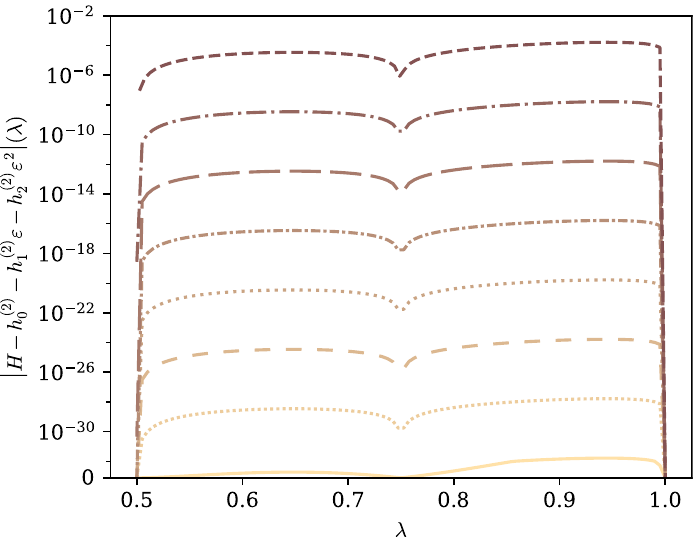}
	}
	%
	%
    \caption{Absolute error of \(H\) against the asymptotic approximation from Lemma \ref{lem:H_asymptotic} for decreasing values of \(\varepsilon\).}
\label{fig:Asymptotics_Error}
\end{figure}

A particular value of interest occurs at the boundary \(a = 1\), where two branches of \(F_\varepsilon\) coincide by continuity. In this setting, it is possible to determine a specialised asymptotic expansion in terms of \(\varepsilon\) that reveals a cubic decay. This is the purpose of the next lemma.

\begin{lemma}\label{lem:H_asymptotic_at_1}
	Let \( 0 < \varepsilon \ll 1\). We can write the expansion
	\begin{equation}
	\label{eq:Expansion_of_F_1}
		F_\varepsilon(1,\varepsilon) =
		\frac{1}{8\pi} G\del{1; \Phi(\varphi)}
		=
		\frac{1}{144\pi} \del{ 2 \varepsilon^3 \log \varepsilon^3 - 5 \varepsilon^3 } 
		+ O(\varepsilon^5 \log \varepsilon) .
	\end{equation}
\end{lemma}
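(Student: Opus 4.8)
The plan is to reduce everything to a single small parameter $\delta$ measuring how far the argument $2\Phi(\varphi)$ sits below $\pi$, and then to expand $G\bigl(1;\Phi(\varphi)\bigr)$ in powers of $\delta$. First I would specialise the geometry to $a=1$: from \cref{eq:Intersection_Angle_Cos} in the regime $a^2\le 1+\varepsilon^2$ one has $L(\varphi)=\tfrac12\varepsilon^2-1$, so that $\cos 2\Phi(\varphi)=\tfrac12\varepsilon^2-1$. Writing $2\Phi(\varphi)=\pi-\delta$ turns this into $\cos\delta=1-\tfrac12\varepsilon^2$, which inverts to $\delta=\varepsilon+O(\varepsilon^3)$. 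Since the final answer will turn out to be cubic, this crude relation $\delta=\varepsilon\bigl(1+O(\varepsilon^2)\bigr)$ already suffices: any correction enters $\delta^3\log\delta$ only at order $\varepsilon^5\log\varepsilon$, inside the stated remainder.

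Second, I would simplify $G(1;\phi)$. Setting $a=1$ in \cref{eq:G_a_phi_theta} annihilates the entire $(1-a^2)$ bracket, leaving
\[
	G(1;\phi)=2\,\Im\bigl(\Li(-e^{2i\phi})\bigr)+\bigl(2-\log(2+2\cos 2\phi)\bigr)\sin 2\phi .
\]
The decisive simplification is that at $\phi=\Phi(\varphi)$, where $2\phi=\pi-\delta$, the dilogarithm argument collapses to $-e^{2i\phi}=e^{-i\delta}$, a point on the unit circle just below $z=1$; no monodromy subtlety arises here (in contrast with \cref{sec:Monodromy}), and $\Im\Li(e^{-i\delta})=-\operatorname{Cl}_2(\delta)$ is the Clausen function. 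Likewise $2+2\cos 2\phi=2(1-\cos\delta)=4\sin^2(\delta/2)$, so the logarithmic factor becomes $\log\bigl(4\sin^2(\delta/2)\bigr)$ while $\sin 2\phi=\sin\delta$.

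Third, I would insert the standard small-angle expansions $\operatorname{Cl}_2(\delta)=\delta-\delta\log\delta+\tfrac{\delta^3}{72}+O(\delta^5)$ and $\log\bigl(4\sin^2(\delta/2)\bigr)=2\log\delta-\tfrac{\delta^2}{12}+O(\delta^4)$, together with $\sin\delta=\delta-\tfrac{\delta^3}{6}+O(\delta^5)$, and collect terms. The crucial feature here, and the main obstacle, is a twofold cancellation: the $O(\delta)$ contributions from the two summands cancel, and so do the $O(\delta\log\delta)$ contributions, so that $G\bigl(1;\Phi(\varphi)\bigr)$ begins only at order $\delta^3$. This is precisely why $F_\varepsilon$ vanishes cubically rather than linearly, and it forces the three expansions above to be carried consistently to order $\delta^3\log\delta$; stopping a step too early would manufacture a spurious linear term. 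After the cancellation one is left with $G\bigl(1;\Phi(\varphi)\bigr)=\tfrac13\delta^3\log\delta-\tfrac{5}{18}\delta^3+O(\delta^5\log\delta)$.

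Finally, substituting $\delta=\varepsilon+O(\varepsilon^3)$ gives $\delta^3\log\delta=\varepsilon^3\log\varepsilon+O(\varepsilon^5\log\varepsilon)$ and $\delta^3=\varepsilon^3+O(\varepsilon^5)$, whence $G\bigl(1;\Phi(\varphi)\bigr)=\tfrac13\varepsilon^3\log\varepsilon-\tfrac{5}{18}\varepsilon^3+O(\varepsilon^5\log\varepsilon)$. Dividing by $8\pi$ and regrouping through $\log\varepsilon^3=3\log\varepsilon$ then yields exactly $F_\varepsilon(1,\varepsilon)=\tfrac{1}{144\pi}\bigl(2\varepsilon^3\log\varepsilon^3-5\varepsilon^3\bigr)+O(\varepsilon^5\log\varepsilon)$, which is \cref{eq:Expansion_of_F_1}.
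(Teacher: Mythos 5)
Your argument is correct, and it reaches the stated expansion by a genuinely different route from the paper. The paper works with the integral representation itself: at $a=1$ it uses $\cos\varphi=-\varepsilon/2$ and $s(\theta)=-2\cos\theta$, substitutes $\theta=u+\pi/2$ to get $\int_0^{\gamma}4\sin^2 u\,(\log(4\sin^2 u)-1)\,\dif u$ with $\gamma=\arcsin(\varepsilon/2)$, and expands the two resulting integrals (one elementary, one via a term-by-term integrated power series) to obtain $\tfrac13\varepsilon^3(\log\varepsilon-\tfrac56)+O(\varepsilon^5\log\varepsilon)$. You instead expand the already-integrated closed form $G(1;\Phi(\varphi))$, observing that at $a=1$ the dilogarithm argument sits on the unit circle, $-e^{2i\phi}=e^{-i\delta}$, so that $\Im\Li(e^{-i\delta})=-\operatorname{Cl}_2(\delta)$ with $\sin(\delta/2)=\varepsilon/2$; your Clausen expansion $\operatorname{Cl}_2(\delta)=\delta-\delta\log\delta+\delta^3/72+O(\delta^5)$ and the remaining factors combine, after the cancellation of the $O(\delta)$ and $O(\delta\log\delta)$ terms that you correctly flag as the crux, to $\tfrac13\delta^3\log\delta-\tfrac{5}{18}\delta^3$, and the relation $\delta=\varepsilon+O(\varepsilon^3)$ is indeed accurate enough at cubic order. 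I have checked the coefficients ($-\tfrac1{36}+\tfrac1{12}-\tfrac13=-\tfrac{5}{18}$) and they agree with the paper's result after dividing by $8\pi$. What your route buys is that it operates directly on the formula one actually implements numerically, making the cubic cancellation (and hence the loss of precision in naive double-precision evaluation) explicit at the level of $G$ itself; what the paper's route buys is that it avoids dilogarithm branch considerations altogether and, via the series for $g_2'$, in principle yields all higher-order terms systematically. One small remark: since $4\sin^2(\delta/2)=\varepsilon^2$ exactly, the logarithmic factor is exactly $2\log\varepsilon$ and $\sin 2\phi=\tfrac{\varepsilon}{2}\sqrt{4-\varepsilon^2}$, which would let you bypass the expansion of $\log(4\sin^2(\delta/2))$ entirely; this is consistent with the entry for $a=1$ in the paper's table of values of $G$.
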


A proof is presented at \cref{App:Proof_B}, where we directly expand the integral form \cref{eq:alternative_H_general} in terms of \(\varepsilon\). We tested the quality of the approximation\footnote{Relevant notebook in repository:  \texttt{\color{xkcdOceanBlue}7 - G - Computation.ipynb}.} and obtained an absolute error that decreased as \( \varepsilon^5 \times 10^{-1}\). 

The last result we will present in this section follows from the proof of Lemma \ref{lem:H_asymptotic}, which allows us to provide a high-precision evaluation of \(E_\varepsilon\):

\begin{corollary}\label{lem:phi_asymptotics}
	Let \( 0 < \varepsilon \ll 1\). We can write the expansion
	\begin{equation}
		\varphi = \varphi_\varepsilon (a(\lambda,\varepsilon) ) = 
		\arccos(1-2\lambda) + \sqrt{ \lambda(1-\lambda)} \varepsilon + \frac{3}{4} (1-2\lambda) \sqrt{ \lambda (1-\lambda) } \varepsilon^2 + O(\varepsilon^3).
	\end{equation}
\end{corollary}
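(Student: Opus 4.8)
The plan is to obtain the asymptotic expansion of the intersection angle $\varphi = \varphi_\varepsilon(a(\lambda,\varepsilon))$ directly from its defining formula, reusing the machinery already developed in the proof of \cref{lem:H_asymptotic}. Recall that $\varphi = \arccos\frac{1-a^2-\varepsilon^2}{2a\varepsilon}$, and substitute the reparametrisation $a = a(\lambda,\varepsilon) = 1 - (1-2\lambda)\varepsilon$ from \cref{eq:Interval_Change_of_Variables}. The first step is to expand the argument of the $\arccos$ as a power series in $\varepsilon$. Writing $a = 1 - (1-2\lambda)\varepsilon$, I would compute $1 - a^2 = 2(1-2\lambda)\varepsilon - (1-2\lambda)^2\varepsilon^2$ and hence
\[
	\frac{1 - a^2 - \varepsilon^2}{2a\varepsilon}
	= \frac{2(1-2\lambda)\varepsilon - (1-2\lambda)^2\varepsilon^2 - \varepsilon^2}{2\bigl(1-(1-2\lambda)\varepsilon\bigr)\varepsilon}.
\]
Cancelling one factor of $\varepsilon$ and expanding the denominator via the geometric series $\bigl(1-(1-2\lambda)\varepsilon\bigr)^{-1} = 1 + (1-2\lambda)\varepsilon + (1-2\lambda)^2\varepsilon^2 + O(\varepsilon^3)$, the argument becomes a clean Taylor series in $\varepsilon$ whose constant term is exactly $1-2\lambda$.

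The second step is to propagate this expansion through $\arccos$. Since the constant term is $1-2\lambda \in (-1,1)$ for $\lambda \in (0,1)$, the function $\arccos$ is smooth there, so I would write $\varphi = \arccos(c_0 + c_1\varepsilon + c_2\varepsilon^2 + O(\varepsilon^3))$ with $c_0 = 1-2\lambda$, and Taylor-expand $\arccos$ about $c_0$ using $\frac{d}{dt}\arccos t = -\frac{1}{\sqrt{1-t^2}}$. Noting that $1 - c_0^2 = 1-(1-2\lambda)^2 = 4\lambda(1-\lambda)$, so $\sqrt{1-c_0^2} = 2\sqrt{\lambda(1-\lambda)}$, the leading correction is $-\frac{c_1}{\sqrt{1-c_0^2}}\,\varepsilon$; collecting the coefficients $c_1, c_2$ from the first step and combining with the second-order term of the $\arccos$ expansion should reproduce the stated coefficients $\sqrt{\lambda(1-\lambda)}$ at order $\varepsilon$ and $\frac{3}{4}(1-2\lambda)\sqrt{\lambda(1-\lambda)}$ at order $\varepsilon^2$.

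The main obstacle I anticipate is purely bookkeeping: the second-order coefficient $c_2$ and the second derivative of $\arccos$ both contribute to the $\varepsilon^2$ term, and one must be careful that the two contributions combine correctly to give the announced value. The identity $\omega_\pm = \arccos(\pm(1-2\lambda))$ introduced in \cref{lem:H_asymptotic} already identifies the leading term $\arccos(1-2\lambda) = \omega_+$, which is a reassuring consistency check, and indeed the coefficient $\sqrt{\lambda(1-\lambda)}$ appearing here is the same quantity that controls $h_1^{(1)}$. Since the statement asserts the result \emph{follows from the proof of \cref{lem:H_asymptotic}}, the cleanest route is to observe that this very expansion of $\varphi$ is an intermediate quantity computed en route to expanding $H$, so the proof reduces to isolating that intermediate step and recording it; no genuinely new estimate is required beyond verifying that the boundary cases $\lambda \in \{0,1\}$ (where $\sqrt{\lambda(1-\lambda)}$ vanishes and $\varphi \to 0$ or $\pi$) are consistent with the endpoint values $\varphi_\varepsilon(1\mp\varepsilon) = 0,\pi$ noted in the second remark.
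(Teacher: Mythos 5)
Your derivation is correct and, in substance, it is the same series-expansion-and-matching machinery the paper invokes: the paper gives no standalone proof of this corollary, merely pointing to the proof of \cref{lem:H_asymptotic}, and your direct route (expand the argument of $\arccos$ in $\varepsilon$ about $c_0 = 1-2\lambda$, then compose with the Taylor series of $\arccos$ using $\sqrt{1-c_0^2} = 2\sqrt{\lambda(1-\lambda)}$) reproduces both stated coefficients. One small inaccuracy: the expansion of $\varphi$ is \emph{not} literally an intermediate quantity in Appendix~A --- what is expanded there is the transformed angle $\phi = \Phi(\varphi) = \tfrac12\arccos L(\varphi)$, a related but distinct quantity (note $L(\varphi) = -a - \varepsilon\cos\varphi$) --- so your self-contained computation is actually needed rather than merely "isolated"; also note that the Taylor expansion of $\arccos$ degenerates at $\lambda \in \{0,1\}$, where the result holds only because every correction term vanishes identically.
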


\subsection{Numerical experiments}\label{sec:Numerics}

We conclude this note by describing the numerical implementation of \(F_\varepsilon\) and \(E_\varepsilon\) and illustrating the asymptotic behaviour predicted by our analyses. Throughout this subsection, we use the rescaling introduced in \cref{eq:Interval_Change_of_Variables}. All experiments were performed on a MacBook Pro 2020 with an M1 chip and 16 GB of RAM. The code for this and the previous sections was written in Python \texttt{3.10.14} and primarily relies on NumPy \texttt{2.2.5}, SciPy \texttt{1.15.1}, SymPy \texttt{1.3.3}, and Mpmath \texttt{1.3.0}.

\begin{figure}[h!]
\centering
	\includegraphics[scale=0.6, page=1]{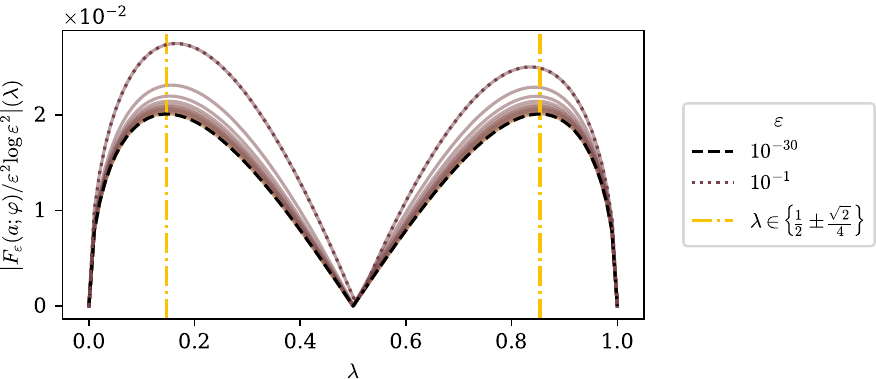}
	\caption{Extended-precision plot of \(F_\varepsilon \) scaled by \( \varepsilon^2 \log \varepsilon^2\) under the parametrisation \cref{eq:Interval_Change_of_Variables}. The function values corresponding to \(\varepsilon\) taking the values \num{e-1} and \num{e-30} are represented by dotted and dashed lines, respectively. The asymptotic maxima are marked with dotted-dash vertical lines.}
\label{fig:Asymptotics_F}
\end{figure}

For ease of presentation, we introduce the scaled quantity \(J_\varepsilon\) as the rescaling of \(F_\varepsilon(a;\varphi_\varepsilon(a))\) by the reciprocal of \( \varepsilon^2 \log \varepsilon^2\). Whenever we use the parametrisation \cref{eq:Interval_Change_of_Variables}, we understand \(J_\varepsilon\) to be given by
\begin{equation}
\label{eq:def_J}
	J_\varepsilon (\lambda) = F_\varepsilon \del{ a(\lambda, \varepsilon); \varphi_\varepsilon \del{a(\lambda,\varepsilon)} } \big/ \varepsilon^2 \log \varepsilon^2.
\end{equation}

We first implemented an extended-precision version of \(F_\varepsilon\) with up to 150 digits and evaluated \(J_\varepsilon\) for \(\varepsilon \in \{10^{-k}\}_{k \in \llb 1,30 \rrb}\). The resulting curves are shown in \cref{fig:Asymptotics_F}. For moderate values of \(\varepsilon\), the scaled profiles are visibly asymmetric, but as \(\varepsilon \to 0\) they collapse onto an asymptotically symmetric limit curve described by the reflection of \(h_1^{(1)}\) (see \cref{eq:Assymptotic_expansion_of_h_two_branches}) about \( \lambda^\dagger \coloneqq \sfrac 1 2\).

This loss of asymmetry can be quantified by the scaled uniform norm of the antisymmetric part of \(J_\varepsilon\) around \(\lambda^\dagger\). We define the asymmetry index
\[ 
	\eta(\varepsilon) 
	\coloneqq  
	\big\| J_\varepsilon (\lambda) - J_\varepsilon (1-\lambda) \big\|_\infty
	=
	\max_{\lambda \in [0,1]} 
	\big| 
	F_\varepsilon \del{ a; \varphi} (\lambda) - F_\varepsilon \del{ a; \varphi} (1-\lambda) 
	\big| \big/ \varepsilon^2 |\log \varepsilon^2|.
\]
An ordinary regression on a logarithmic scale suggests that \( \eta(\varepsilon) \sim 2\varepsilon \times \num{e-2}\), so that the rate of convergence, under the uniform norm, of the original (unscaled) quantity is about \(\varepsilon^3 \, |\log \varepsilon^4|\). As a consequence, the two critical points of \(h_1^{(1)}\) become the asymptotic minima of \(J_\varepsilon\).

Furthermore, we observe three local minimisers of \(J_\varepsilon\) located at the endpoints of the interval \(\lambda \in \{0,1\}\) and at \(\lambda^\dagger\). The latter minimiser corresponds to the value \(a = 1\). This is consistent with the geometric definition of \(F_\varepsilon\) as the integral of a signed kernel over an intersection: at \(a = 1\), the configuration produces the most symmetric overlap, where the \(\varepsilon\)-disc straddles the unit disc and the logarithmic kernel \(k\) changes sign across the interface.

\begin{figure}[h!]
\centering
	\includegraphics[scale=0.6, page=1]{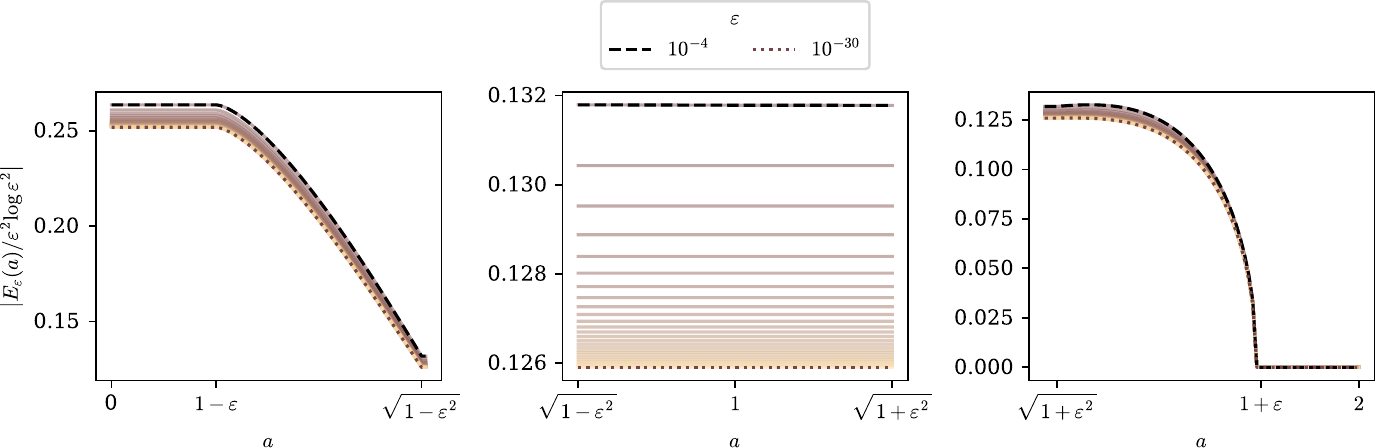}
	\caption{High precision plots of \(E_\varepsilon \) scaled by \( \varepsilon^2 \log \varepsilon^2\) for \(a\) inside the intervals \([0,\sqrt{1-\varepsilon^2}]\), \([\sqrt{1-\varepsilon^2},\sqrt{1+\varepsilon^2}]\), and \([\sqrt{1+\varepsilon^2},2]\). The function values corresponding to \(\varepsilon\) taking the values \num{e-4} and \num{e-30} are represented by dotted and dashed lines, respectively.}
\label{fig:EP_E}
\end{figure}

Our results suggest that for \(\varepsilon \lesssim 10^{-5}\), the function \(F_\varepsilon\) is already well approximated by \(h_1^{(1)}\) in double-precision arithmetic. We used this approximation in our double-precision implementation\footnote{Relevant notebook in the repository: \texttt{\color{xkcdOceanBlue}8 - E - High double.ipynb}.} of \(E_\varepsilon\) for \(\varepsilon \leq \num{e-5}\). This allowed us to reproduce \cref{fig:EP_E}, originally generated in extended precision, down to \(\varepsilon = \num{e-14}\).

Comparing the extended- and double-precision versions of \cref{eq:def_J}, we observe that the double-precision evaluation of the scaled quantity agrees with the extended-precision reference to within about \(\num{e-7}\) for \(\varepsilon \gtrsim \num{e-10}\), and the discrepancy remains below \(\num{e-3}\), where a loss of precision becomes apparent for very small \(\varepsilon\). When this difference is translated back to the unscaled quantity \(E_\varepsilon\), the corresponding absolute error lies between \(\num{e-15}\) and \(\num{e-29}\), i.e., at or below the level expected from double-precision roundoff. Thus, our implementation captures \(E_\varepsilon\) with essentially machine accuracy over the entire range of tests; the apparent loss of digits in the scaled quantity for very small \(\varepsilon\) is merely a consequence of the ill-conditioning introduced by the factor \(\varepsilon^{2}\log\varepsilon^{2}\).

Additional experiments are available in our repository to further validate the results of this note. In particular, we also provide a Python module and a \textsc{Matlab} implementation for evaluating \(E_\varepsilon\) as a function of \(a\).


\subsection*{Declarations}

\subsubsection*{Funding}
This research was supported by the Engineering and Physical Sciences Research Council (EPSRC) via the Postdoctoral Pathways for Growth in the UK scheme, administered by the University of Edinburgh.

\subsubsection*{Acknowledgements}
The work was done partially while the author was participating in the programme Multiscale Analysis and Methods for Quantum and Kinetic Problems at the Institute for Mathematical Sciences, National University of Singapore, in 2023.

\subsubsection*{Data availability statement}

The data that support the findings of this study are openly available in \texttt{DiscConv}. \href{https://doi.org/10.5281/zenodo.7819767}{doi: 10.5281/zeno\-do.17807158}.

\subsubsection*{Uses of generative artificial intelligence}

During the preparation of this manuscript, the author used large language models (in particular ChatGPT-v5.1 and DeepSeek-v3) for assistance with language editing and with the manipulation and checking of certain integral expressions (GAIDeT taxonomy (2025): Proofreading and editing \& Preliminary hypothesis testing). After using this tool/service, the author reviewed and edited the content as needed and takes full responsibility for the content of the publication. All mathematical arguments, results, and numerical experiments were formulated, verified, and validated by the author. GAI tools are not listed as authors and do not bear responsibility for the final outcomes.

\subsubsection*{Conflicts of interest}
The author does not work for, advise, own shares in, or receive funds from any organisation that could benefit from this article, and has declared no affiliations other than their research organisation.

\addcontentsline{toc}{part}{Bibliography}
\setlength{\bibsep}{0pt plus 0.3ex}
\footnotesize
\bibliographystyle{siamplain}
\bibliography{Main.bbl}

\begin{thebibliography}{10}

\bibitem{Bao2010a}
{\sc W.~Bao, Y.~Cai, and H.~Wang}, {\em Efficient numerical methods for
  computing ground states and dynamics of dipolar {B}ose--{E}instein
  condensates}, Journal of Computational Physics, 229 (2010), pp.~7874--7892,
  \url{https://doi.org/10.1016/j.jcp.2010.07.001}.

\bibitem{Bertozzi2012a}
{\sc A.~L. Bertozzi, T.~Laurent, and F.~Léger}, {\em Aggregation and spreading
  via the {N}ewtoninan potential: {T}he dynamics of patch solutions},
  Mathematical Models and Methods in Applied Sciences, 22 (2012),
  \url{https://doi.org/10.1142/s0218202511400057}.

\bibitem{Bettoni2014a}
{\sc D.~Bettoni, M.~Colombo, and S.~Liberati}, {\em Dark matter as a
  {B}ose-{E}instein {C}ondensate: the relativistic non-minimally coupled case},
  Journal of Cosmology and Astroparticle Physics, 2014 (2014), pp.~004--004,
  \url{https://doi.org/10.1088/1475-7516/2014/02/004}.

\bibitem{Bonheure2021a}
{\sc D.~Bonheure, S.~Cingolani, and S.~Secchi}, {\em Concentration phenomena
  for the {S}chrödinger--{P}oisson system in $\mathbb{R}^2$}, Discrete \&
  Continuous Dynamical Systems - S, 14 (2021), p.~1631,
  \url{https://doi.org/10.3934/dcdss.2020447}.

\bibitem{Calvez2007a}
{\sc V.~Calvez, B.~Perthame, and M.~Sharifi~tabar}, {\em Modified
  {K}eller--{S}egel system and critical mass for the log interaction kernel},
  in Stochastic Analysis and Partial Differential Equations, American
  Mathematical Society, 2007, \url{https://doi.org/10.1090/conm/429}.

\bibitem{Carrillo2019a}
{\sc J.~A. Carrillo, S.~Hittmeir, B.~Volzone, and Y.~Yao}, {\em Nonlinear
  aggregation--diffusion equations: radial symmetry and long time asymptotics},
  Inventiones mathematicae, 218 (2019), pp.~889--977,
  \url{https://doi.org/10.1007/s00222-019-00898-x}.

\bibitem{Cassani2021a}
{\sc D.~Cassani and C.~Tarsi}, {\em Schrödinger--{N}ewton equations in
  dimension two via a {P}ohozaev--{T}rudinger log-weighted inequality},
  Calculus of Variations and Partial Differential Equations, 60 (2021),
  \url{https://doi.org/10.1007/s00526-021-02071-w}.

\bibitem{Cozzi2017a}
{\sc E.~Cozzi, G.-M. Gie, and J.~P. Kelliher}, {\em The aggregation equation
  with {N}ewtonian potential: {T}he vanishing viscosity limit}, Journal of
  Mathematical Analysis and Applications, 453 (2017), pp.~841--893,
  \url{https://doi.org/10.1016/j.jmaa.2017.04.009}.

\bibitem{DiFrancesco2021a}
{\sc M.~Di Francesco, A.~Esposito, and M.~Schmidtchen}, {\em Many-particle
  limit for a system of interaction equations driven by {N}ewtonian
  potentials}, Calculus of Variations and Partial Differential Equations, 60
  (2021), \url{https://doi.org/10.1007/s00526-021-01960-4}.

\bibitem{Evans_2010}
{\sc L.~C. Evans}, {\em Partial {D}ifferential {E}quations}, vol.~19 of
  Graduate Studies in Mathematics, American Mathematical Society, 2010.

\bibitem{Fornoni2023}
{\sc M.~Fornoni}, {\em Optimal distributed control for a viscous non--local
  tumour growth model}, Applied Mathematics \& Optimization, 89 (2023),
  \url{https://doi.org/10.1007/s00245-023-10076-4}.

\bibitem{Gal2017}
{\sc C.~G. Gal, A.~Giorgini, and M.~Grasselli}, {\em The nonlocal
  {C}ahn--{H}illiard equation with singular potential: {W}ell--posedness,
  regularity and strict separation property}, Journal of Differential
  Equations, 263 (2017), pp.~5253--5297,
  \url{https://doi.org/10.1016/j.jde.2017.06.015}.

\bibitem{AGGP}
{\sc C.~G. Gal, A.~Giorgini, M.~Grasselli, and A.~Poiatti}, {\em Global
  well-posedness and convergence to equilibrium for the
  {A}bels-{G}arcke-{G}r\"un model with nonlocal free energy}, J. Math. Pures
  Appl. (9), {178} (2023), pp.~{46--109}.

\bibitem{Garcke2016}
{\sc H.~Garcke, K.~F. Lam, E.~Sitka, and V.~Styles}, {\em A
  {C}ahn--{H}illiard--{D}arcy model for tumour growth with chemotaxis and
  active transport}, Mathematical Models and Methods in Applied Sciences, 26
  (2016), pp.~1095--1148, \url{https://doi.org/10.1142/s0218202516500263}.

\bibitem{Giacomin1996}
{\sc G.~Giacomin and J.~L. Lebowitz}, {\em Exact macroscopic description of
  phase segregation in model alloys with long range interactions}, Physical
  Review Letters, 76 (1996), pp.~1094--1097,
  \url{https://doi.org/10.1103/physrevlett.76.1094}.

\bibitem{Gilbarg2001}
{\sc D.~Gilbarg and N.~S. Trudinger}, {\em Elliptic {P}artial {D}ifferential
  {E}quations of {S}econd {O}rder}, Springer, 2001,
  \url{https://doi.org/10.1007/978-3-642-61798-0}.

\bibitem{Girelli2008a}
{\sc F.~Girelli, S.~Liberati, and L.~Sindoni}, {\em Gravitational dynamics in
  {B}ose-{E}instein condensates}, Physical Review D, 78 (2008), p.~084013,
  \url{https://doi.org/10.1103/physrevd.78.084013}.

\bibitem{Harrison2002a}
{\sc R.~Harrison, I.~Moroz, and K.~P. Tod}, {\em A numerical study of the
  {S}chrödinger--{N}ewton equations}, Nonlinearity, 16 (2002), pp.~101--122,
  \url{https://doi.org/10.1088/0951-7715/16/1/307}.

\bibitem{He2025a}
{\sc Q.~He, H.-L. Li, and B.~Perthame}, {\em Incompressible limit of porous
  media equation with chemotaxis and growth}, in Partial Differential
  Equations: Waves, Nonlinearities and Nonlocalities, M.~Ehrnström, H.~Holden,
  and E.~R. Jakobsen, eds., Springer Nature Switzerland, 2025, pp.~111--128,
  \url{https://doi.org/10.1007/978-3-031-91282-5_6}.

\bibitem{Masmoudi2005a}
{\sc N.~Masmoudi and P.~Zhang}, {\em Global solutions to vortex density
  equations arising from sup-conductivity}, Annales de l’Institut Henri
  Poincaré C, Analyse non linéaire, 22 (2005), pp.~441--458,
  \url{https://doi.org/10.1016/j.anihpc.2004.07.002}.

\bibitem{AMT_Th_2024}
{\sc A.~Miniguano-Trujillo}, {\em {C}omputational {M}ethods for {N}onlocal
  {PDE}--{C}onstrained {O}ptimisation: {A}pplications in {E}cology, {I}mage
  {P}rocessing, and {P}hase {S}eparation}, {PhD} thesis, The University of
  Edinburgh: School of Mathematics, 2025,
  \url{https://doi.org/10.7488/era/5949}.

\bibitem{OSullivan2016a}
{\sc C.~O’Sullivan}, {\em Zeros of the dilogarithm}, Mathematics of
  Computation, 85 (2016), pp.~2967--2993,
  \url{https://doi.org/10.1090/mcom/3065}.

\bibitem{Perthame2015}
{\sc B.~Perthame}, {\em Parabolic {E}quations in {B}iology: {G}rowth,
  {R}eaction, {M}ovement and {D}iffusion}, Springer, 2015,
  \url{https://doi.org/10.1007/978-3-319-19500-1}.

\bibitem{Poiatti2024}
{\sc A.~Poiatti and A.~Signori}, {\em Regularity results and optimal velocity
  control of the convective nonlocal {C}ahn--{H}illiard equation in {3D}},
  ESAIM: Control, Optimisation and Calculus of Variations, 30 (2024), p.~21,
  \url{https://doi.org/10.1051/cocv/2024007}.

\bibitem{Smith2023a}
{\sc J.~C. Smith, D.~Baillie, and P.~B. Blakie}, {\em Supersolidity and
  crystallization of a dipolar {B}ose gas in an infinite tube}, Physical Review
  A, 107 (2023), p.~033301, \url{https://doi.org/10.1103/physreva.107.033301}.

\bibitem{Tsubota2017a}
{\sc M.~Tsubota, K.~Fujimoto, and S.~Yui}, {\em Numerical studies of quantum
  turbulence}, Journal of Low Temperature Physics, 188 (2017), pp.~119--189,
  \url{https://doi.org/10.1007/s10909-017-1789-8}.

\bibitem{Weinan1994a}
{\sc E.~Weinan}, {\em Dynamics of vortex liquids in {G}inzburg--{L}andau
  theories with applications to superconductivity}, Physical Review B, 50
  (1994), pp.~1126--1135, \url{https://doi.org/10.1103/physrevb.50.1126}.

\bibitem{Zagier2007a}
{\sc D.~Zagier}, {\em The {D}ilogarithm {F}unction}, in Frontiers in Number
  Theory, Physics, and Geometry II, P.~Cartier, P.~Moussa, B.~Julia, and
  P.~Vanhove, eds., Springer, 2007, pp.~3--65,
  \url{https://doi.org/10.1007/978-3-540-30308-4_1}.

\end{thebibliography}


\newpage
\begin{appendices}
\renewcommand{\theequation}{A--\arabic{section}.\arabic{equation}}
\setcounter{equation}{0}

\section{Proof of Lemma \ref{lem:H_asymptotic}}\label{App:Proof_A}

\subsection*{First branch}

Let's start with the assumption that \( a^2 \leq 1 + \varepsilon^2\), which translates to \( \lambda \leq  \frac{ (\varepsilon-1) + \sqrt{1+\varepsilon^2} }{2\varepsilon}\).
Under the parametrisation by \(\lambda\), we have that \cref{eq:H_Full} can be written as
\begin{equation}\label{eq:H_lambda_split_1}
	H(\lambda,\varepsilon) 
	= \frac{1}{4\pi} [\mathfrak{A} + \mathfrak{B} + \mathfrak{C}] - \mathfrak{D}; 
\end{equation}
where for \( \phi = \dfrac{1}{2} \arccos \left( \dfrac{ \varepsilon^2  - 1 - a^2}{ 2 a }  \right) \) we have
\begin{align*}
	\mathfrak{A} &= \Im \big( \Li (-a e^{2i\phi}) \big),
	&
	\mathfrak{B} &= (1-a^2) \left[ \phi - \frac{1}{2}  \arctan\left(\frac{ a \sin 2\phi}{1 + a \cos 2\phi} \right)  \right],
	&
	\mathfrak{C} &= a ( 1 - \log \varepsilon  )\sin 2\phi,
	&
	\mathfrak{D} &= \frac{1}{8} (1-a^2).
\end{align*}

Let us introduce \( \beta \coloneqq 1 -2 \lambda\), for which \( a = 1 - \beta\varepsilon\) and \( S \coloneqq 2\sqrt{\lambda (1-\lambda)} = \sqrt{1 - \beta^2} \). Let us also define \( A \coloneqq \pi - 2 \phi\), then \( -a e^{2i \phi} = a e^{ -iA } \).
By definition \( \varepsilon^2 - 1 - a^2 = -2 + 2 \beta \varepsilon + (1-\beta^2) \varepsilon^2 \), and using \( (1-\delta)^{-1} = 1 + \delta + \delta^2 + O(\delta^3)\), we get
\[
	\cos A = -\cos 2\phi = 1 - \frac{S^2}{2 (1-\beta \varepsilon)} = 1 - \frac{S^2}{2} \varepsilon^2 - \frac{S^2 \beta}{2} \varepsilon^3 + O(\varepsilon^4).
\]
Matching this expansion with the series \( \cos \delta = 1 - (\sfrac 1 2) \delta^2 + (\sfrac 1 4) \delta^4 + O(\delta^6)\), we obtain 
that \( A = S \varepsilon + (\sfrac{1}{2}) S\beta \varepsilon^2 + O(\varepsilon^3) \).
Now, the Taylor series of sine gives
\begin{equation}
\label{eq:angle_cos_sine_DS}
	\phi = \frac{\pi}{2} - \frac{1}{2} S \varepsilon - \frac{1}{4} S\beta \varepsilon^2 + O(\varepsilon^3),
	\quad
	\cos 2\phi = -1 + \frac{1}{2} S^2 \varepsilon^2 + O(\varepsilon^3),
	\quad\text{and}\quad
	\sin 2\phi = S\varepsilon + \frac{1}{2} S\beta \varepsilon^2 + O(\varepsilon^3).
\end{equation}
We automatically obtain that
\(
	a \sin 2\phi = S \varepsilon - \frac{1}{2} S \beta \varepsilon^2 + O(\varepsilon^3).
\)
and thus
\[
	\boxed{
	\mathfrak C = 
	S(1-\log\varepsilon) \varepsilon + \frac{1}{2} S\beta (\log \varepsilon - 1) \varepsilon^2 + O(\varepsilon^3)
	}
	\qquad\text{and}\qquad
	\boxed{
	\mathfrak{D} = \frac{1}{8} (1-a^2) = 
	\frac{1}{4} \beta \varepsilon  - \frac{1}{8} (\beta \varepsilon)^2.
	}
\]

\subsubsection*{Expanding \(\mathfrak B\)}

Observe that \( 1-a^2  = 2\beta \varepsilon - \beta^2 \varepsilon^2\). Now recall that \( \varepsilon^2 = 1 + a^2 + 2a \cos 2\phi\) in this branch. Let \( w \coloneqq 1 + a e^{2i \phi} = 1- ae^{-iA} \) and notice that
\begin{equation}\label{eq:A1_atan_as_arg}
	T_1(\varepsilon) \coloneqq  \arctan \frac{a \sin 2\phi}{1 + a \cos 2\phi} = \arg w
	\qquad\text{and}\qquad
	|w| = \sqrt{1 + a^2 + 2a \cos 2\phi} = \varepsilon.
\end{equation}
By definition 
\begin{align*}
	e^{2 i\phi} 
	&=
	\del{-1 + \frac{1}{2} S^2 \varepsilon^2 }
	+ i \del{S\varepsilon + \frac{1}{2} S\beta \varepsilon^2} + O(\varepsilon^3),
	\\
	a e^{2 i\phi}  &= \sbr{ (-1 + \beta \varepsilon) + \frac{1}{2} S^2 \varepsilon^2} 
	+ i \del{ S\varepsilon - \frac{1}{2} S\beta \varepsilon^2  }   + O(\varepsilon^3),
	&
	w &= \varepsilon \del{ (\beta + iS) + \frac{1}{2} \del{ S^2  - i S\beta }\varepsilon  }  + O(\varepsilon^3).
\end{align*}
Here, let us define \( m_1 \coloneqq \beta + i S \) and \( m_2  \coloneqq (\sfrac{1}{2}) (S^2  - i \beta S) \) and write \( w = \varepsilon \del{m_1 + m_2 \varepsilon + O(\varepsilon^2) } \). Recalling the identity \( \arg z = \Im \log z\), we obtain that
\[
	\arg w = \arg m_1 \del{ 1 + \frac{m_2}{m_1} \varepsilon + O(\varepsilon^2) } 
	= \arg m_1 + \varepsilon \Im \frac{m_2}{m_1} + O(\varepsilon^2).
\]
Now, \(|m_1| = 1\) and \(  \sfrac{m_2}{m_1} = -(\sfrac 1 2) i S  \). As a result
\(
	T_1(\varepsilon) = \arg (\beta + iS) - \frac{1}{2} S \varepsilon + O(\varepsilon^2).
\)
Thence,
\[
	T_2(\varepsilon) \coloneqq \phi - \frac{1}{2} T_1(\varepsilon)
	=  \frac{1}{2} \del{ \pi -  \arg (\beta + iS) } 
	- \frac{1}{4} S \varepsilon - \frac{1}{4} S\beta \varepsilon^2
	+ O(\varepsilon^2).
\]
Here, considering the fact that \( \arg (\beta + iS) = \arctan \sfrac{S}{\beta} = \arccos\beta\), we obtain that
\[
	\boxed{
		\mathfrak{B} =
		\beta \del{ \pi -  \arccos\beta } \varepsilon
		-
		\frac{1}{2} \del{S\beta  + \beta^2  \del{ \pi -  \arccos\beta } } \varepsilon^2
		+ O(\varepsilon^3).
	}
\]

\subsubsection*{Expanding \(\mathfrak A\)}

We will use a similar expansion for the dilogarithm. First observe,
\begin{align*}
	\log w 
	= \log \varepsilon + i \arccos \beta + \varepsilon \frac{m_2}{m_1} + O(\varepsilon^2)
\end{align*}
and define \( \omega \coloneqq \arccos \beta\).
Now, combining the reflection formula of the dilogarithm \( \Li(1-\delta) = (\sfrac{\pi^2}{6}) - \log(1-\delta) \log \delta - \Li(\delta)\), and the series expansions \( \log(1-\delta) = -\delta + (\sfrac 1 2) \delta^2 + O(\delta^3)\) and \( \Li (\delta) = \delta + (\sfrac 1 4) \delta^2 + O(\delta^3)\), we obtain
\begin{align*}
	\Li(1-w) 
	&= \frac{\pi^2}{6} + w \log w - \frac{1}{2} w^2 \log w  - w - \frac{1}{4} w^2 + O(w^3 \log w ).
\end{align*}
We need the imaginary part of each term, where
\begin{align*}
	w \log w 
	&= m_1 \varepsilon (\log \varepsilon  + i \omega) + m_2 \varepsilon^2  (\log \varepsilon  + i \omega) + (m_1 \varepsilon) \frac{m_2}{m_1} \varepsilon + O(\varepsilon^3 \log \varepsilon),
	\\
	\Im w \log w 
	&= S \varepsilon \log \varepsilon + \beta \omega \varepsilon - \frac{1}{2} \beta S \varepsilon^2 \log\varepsilon 
	+ \frac{1}{2} \del{ S^2 \omega  - S\beta} \varepsilon^2 + O(\varepsilon^3 \log \varepsilon),
	\\
	-\Im w &= - S \varepsilon + \frac{1}{2} \beta S \varepsilon^2 + O(\varepsilon^3),
	\\
	-\dfrac{1}{4} \Im w^2 &
	= -\dfrac{1}{4}  \Im \del{ (\beta^2 - S^2) + i 2 \beta S } \varepsilon^2 + O(\varepsilon^3)
	= -\dfrac{1}{2}  \beta S \varepsilon^2 + O(\varepsilon^3),
	\\
	\frac{1}{2}\Im w^2 \log w 
	&= \frac{1}{2} \varepsilon^2 \del{ \log \varepsilon \Im m_1^2 + \omega \Re m_1^2 } + O(\varepsilon^3 \log\varepsilon)
	= \varepsilon^2 \del{ \beta S \log \varepsilon + \frac{1}{2} (\beta^2 - S^2) \omega } + O(\varepsilon^3 \log\varepsilon).
\end{align*}
Thus
\[
	\boxed{
		\mathfrak{A} =  S \varepsilon \log \varepsilon + \del{\beta \omega -S} \varepsilon + \frac{1}{2} S \beta \varepsilon^2 \log \varepsilon
		+
		\frac{1}{2} \del{ \beta^2 \omega  - S\beta } \varepsilon^2
		 + O(\varepsilon^3 \log\varepsilon).
	}
\]

Denote by \( \Pi_{f(\varepsilon^n)} \) the \(n\)--th projector of a polynomial in terms of \(f(\varepsilon)\), then we have that
\begin{align*}
	\Pi_\varepsilon \del{ \mathfrak{B}+\mathfrak{C} - 4\pi \mathfrak{D} }  
	&= S - \beta\omega,
	&
	\Pi_{\varepsilon \log\varepsilon} \del{ \mathfrak{B}+\mathfrak{C} - 4\pi \mathfrak{D} }  
	&= -S,
	\\
	\Pi_{\varepsilon^2} \del{ \mathfrak{B}+\mathfrak{C} - 4\pi \mathfrak{D} }  
	&= \frac{1}{2} (\beta^2 \omega - 2S\beta),
	&
	\Pi_{\varepsilon^2 \log\varepsilon} \del{ \mathfrak{B}+\mathfrak{C} - 4\pi \mathfrak{D} }  
	&= \frac{1}{2} S\beta .
\end{align*}
Observe that the \( \varepsilon\) and \( \varepsilon \log \varepsilon\) terms cancel out with their counterparts in \( \mathfrak{A}\); also
\begin{align*}
	\Pi_{\varepsilon^2 \log\varepsilon^2} \del{ \mathfrak{A} + \mathfrak{B}+\mathfrak{C} - 4\pi \mathfrak{D} }  
	&= 
	\frac{1}{2} S\beta
	= (1-2\lambda) \sqrt{\lambda (1-\lambda)} = 4\pi h_1^{(1)}(\lambda),
	\\
	\Pi_{\varepsilon^2} \del{ \mathfrak{A} + \mathfrak{B}+\mathfrak{C} - 4\pi \mathfrak{D} }  
	&=
	\beta^2 \omega - \frac{3}{2} S\beta 
	=
	(1-2\lambda) \sbr{ (1-2\lambda) \arccos(1-2\lambda) - 3 \sqrt{\lambda (1-\lambda)} }
	= 4\pi h_2^{(1)}(\lambda).
\end{align*}

\subsection*{Second branch}

Let us now turn to the case \( a^2 > 1 + \varepsilon^2\), which translates to \( \lambda >  \frac{ (\varepsilon-1) + \sqrt{1+\varepsilon^2} }{2\varepsilon} > \sfrac 1 2\). Once more, we write \cref{eq:H_Full} as
\begin{equation}\label{eq:H_lambda_split_2}
	H(\lambda,\varepsilon) \coloneqq H(a,\varphi)
	= \frac{1}{4\pi} [\mathfrak{A} + \mathfrak{B} + \mathfrak{C}] - \mathfrak{D}; 
\end{equation}
where 
\begin{align*}
	\phi &= \dfrac{1}{2} \arccos \left( \dfrac{ (a^2 - 1)^2 }{ 2 a \varepsilon^2 } - \frac{1+a^2}{2a}   \right),
	&
	\mathfrak{A} &= \Im \big( \Li (-a e^{2i\phi}) \big),
	\\
	\mathfrak{B} &= (1-a^2) \sbr{ \phi - \frac{1}{2}  \arctan\left(\frac{ a \sin 2\phi}{1 + a \cos 2\phi} \right)  },
	&
	\mathfrak{C} &= a \sbr{ 1 - \log \frac{a^2-1}{\varepsilon}   }\sin 2\phi,
	&
	\mathfrak{D} &= \frac{1}{8} (1-a^2).
\end{align*}
Now define \(\beta \coloneqq 2\lambda - 1\) and \( \omega \coloneqq \arccos \beta\). Notice that \( a = 1 + \beta \varepsilon\) and \(\beta \in (0,1]\); in particular, we have that \( \beta > \varepsilon^{-1} (\sqrt{1+\varepsilon^2} -1 ) \). We can directly obtain the expansions
\(
	a^2 - 1 = 2\beta \varepsilon + \beta^2 \varepsilon^2
	\),
	\(
	(a^2 - 1)^2 = 4\beta^2 \varepsilon^2 + 4 \beta^3 \varepsilon^3 + \beta^4 \varepsilon^4,
\)
and
\[
	\boxed{
	\mathfrak D = -\frac{1}{4} \beta \varepsilon - \frac{1}{8} \beta^2 \varepsilon^2.
	}
\]
Employing the series expansion of a rational quotient, we have 
\begin{align}
	\cos 2\phi 
	&= \frac{1}{2} \sbr{ \del{ 4\beta^2 (1+\beta \varepsilon) + \beta^4 \varepsilon^2  } - \del{ 2(1+\beta\varepsilon) + \beta^2 \varepsilon^2 } }
	\sbr{ 1 - \beta \varepsilon +  \beta^2 \varepsilon^2  + O(\varepsilon^3) }
	\notag
	\\
	&= \sbr{ 2\beta^2 + \frac{1}{2} \beta^4 \varepsilon^2 } -
	\sbr{ 1 + \frac{1}{2} \beta^2 \varepsilon^2 } 
	+ O(\varepsilon^3)
	\notag
	\\
	&= (2\beta^2 - 1) + \frac{1}{2} \beta^2 (\beta^2 -1) \varepsilon^2 + O(\varepsilon^3).
	\label{eq:A1_b2_cos2phi_1}
\end{align}
Now, let us assume the expansion \( \phi = \phi_0 + \phi_1 \varepsilon + \phi_2 \varepsilon^2 + O(\varepsilon^3)\), we can use the Taylor expansion of cosine and sine to obtain
\begin{align*}
	\cos 2\phi &= \cos 2\phi_0 - 2 \phi_1 \sin 2\phi_0 \varepsilon - 2\sbr{\phi_2 \sin 2\phi_0 + \phi_1^2 \cos 2\phi_0  } \varepsilon^2 + O(\varepsilon^3),
	\\
	\sin 2\phi &= \sin 2\phi_0 + 2 \phi_1 \cos 2\phi_0 \varepsilon + 2\sbr{\phi_2 \cos 2\phi_0 - \phi_1^2 \sin 2\phi_0  } \varepsilon^2 + O(\varepsilon^3).
\end{align*}
Proceed to match terms against \cref{eq:A1_b2_cos2phi_1}. First,
\(
	\cos 2\phi_0  = 2\cos^2 \phi_0 - 1 =2\beta^2 - 1
\)
yields \( \phi_0 = \omega\), \(\sin \phi_0 = \sqrt{1-\beta^2}\), and \( \sin 2\phi_0 = 2\beta \sqrt{1-\beta^2} \), where the sign of the latter comes from the domain of \(\beta\).
Also \( \phi_1 = 0\) and then
\[
	\phi_2 = \frac{1}{4} \frac{ \beta^2 (1 - \beta^2) }{ \sin 2\phi_0 }
	= \frac{1}{8} \frac{ \beta (1 - \beta^2) }{ \sqrt{1 - \beta^2} }
	= \frac{1}{8} \beta \sqrt{1-\beta^2} = \frac{1}{8} \beta^2 \tan \omega.
\]
Collecting terms, we have obtained the general expansions
\begin{equation}\label{eq:angle_cos_sine_2}
\begin{aligned}
	\phi = \omega + \frac{1}{8} \beta \sqrt{1-\beta^2} \varepsilon^2 + O(\varepsilon^3),
	\qquad
	\cos 2\phi &= (2\beta^2 - 1) + \frac{1}{2} \beta^2 (\beta^2 -1) \varepsilon^2 + O(\varepsilon^3),
	\\
	\sin 2\phi &= 2\beta \sqrt{1-\beta^2} + \frac{1}{4} \beta (2\beta^2 -1) \sqrt{1-\beta^2}  \varepsilon^2  + O(\varepsilon^3).
\end{aligned}
\end{equation}
Now we can proceed to expand each term in \(H\).

\subsubsection*{Expanding \(\mathfrak A\)}

We use the Taylor expansion:
\[
	\Im \Li (z) = \Im  \Li (w) - \Im \frac{\log (1-w)}{w} (z-w) + \frac{1}{2} \Im \del{ \frac{1}{w(1-w)} + \frac{\log(1-w)}{w^2} } (z-w)^2 + O\del{(z-w)^3}.
\]
Letting \( z \coloneqq -a e^{2i\phi} \), we obtain
\begin{align*}
	z &= -(1+ \beta \varepsilon) e^{2i ( \phi_0 + \phi_2 \varepsilon^2 + O(\varepsilon^3) )}
	= -(1+ \beta \varepsilon) e^{2i \phi_0} \sbr{ 1 + 2 i \phi_2 \varepsilon^2 + O(\varepsilon^3) } 
	= -e^{2i \phi_0} \sbr{ 1 + \beta \varepsilon + 2 i \phi_2 \varepsilon^2 }  + O(\varepsilon^3).
\end{align*}
We let \( w \coloneqq -e^{2i \phi_0}\), then \( 1 - w = e^{i\phi_0} (e^{-i\phi_0} + e^{i\phi_0}) = 2\cos \phi_0 e^{i\phi_0}\), \( \arg w = \phi_0\), and
\begin{align*}
	\log (1- w) &= \log (2 \cos \phi_0) + i\phi_0 = \log 2 \beta + i \phi_0,
	&
	\frac{(z-w)^2}{w(1-w)} &= - \frac{e^{i\omega}}{2 \cos \omega} \beta^2 \varepsilon^2 + O(\varepsilon^3),
	\\
	\frac{z-w}{w} \log (1-w) 
	&= \del{ \beta \varepsilon + 2 i \phi_2 \varepsilon^2 }\del{\log 2 \beta + i \phi_0} + O(\varepsilon^3),
	&
	\frac{(z-w)^2}{w^2} \log(1-w) &= \beta^2 \del{ \log 2 \beta + i \phi_0 } \varepsilon^2  + O(\varepsilon^3).
\end{align*}
Then
\[
	\boxed{
	\mathfrak A = \Im  \Li (-e^{2i \omega}) - \beta \omega \varepsilon + \sbr{ -\frac{1}{4} \beta \sqrt{1 - \beta^2} \del{1 + \log 2\beta} + \frac{1}{2} \beta^2 \omega }\varepsilon^2
	+ O\del{\varepsilon^3}.
	}
\]

\subsubsection*{Expanding \(\mathfrak B\)}

Proceeding as in \cref{eq:A1_atan_as_arg}, we have that
\[
	\arctan \frac{a \sin 2\phi}{1 + a \cos 2\phi} = \frac{2 a \beta \sqrt{1-\beta^2}}{1 + a (2\beta^2 - 1)}
	= 
	\arg(1 + a e^{2i\phi}) \eqqcolon \arg v.
\]
Since 
\( 
	v = 1-z = (1 - e^{2\i \omega}) + e^{2i\omega} \sbr{ \beta \varepsilon + 2i \phi_2 \varepsilon^2 } + O(\varepsilon^3)
	= 2\beta e^{i\omega} + e^{2i\omega} \sbr{ \beta \varepsilon + 2i \phi_2 \varepsilon^2 } + O(\varepsilon^3)
\), we obtain
\begin{align*}
	\arg v &= \arg 2\beta e^{i\omega} + \varepsilon \Im \frac{e^{2i\omega} \beta}{ 2\beta e^{i\omega} } + O(\varepsilon^2)
	= \omega + \frac{1}{2} \sqrt{1 - \beta^2} \varepsilon + O(\varepsilon^2),
	\\
	\phi - \frac{1}{2} \arg v 
	&=
	\frac{1}{4} \del{ 2 \omega -  \sqrt{1 - \beta^2} \varepsilon } + O(\varepsilon^2) .
\end{align*}
Thus
\[
	\boxed{
	\mathfrak B 
	=
	- \beta \omega \varepsilon + \frac{1}{2} \del{ \beta \sqrt{1 - \beta^2} - \beta^2 \omega } \varepsilon^2 + O(\varepsilon^3).
	}
\]

\subsubsection*{Expanding \(\mathfrak C\)}

Here, we employ the Taylor expansion 
\( 
	\log \frac{a^2-1}{\varepsilon} = \log (2\beta + \beta^2 \varepsilon) 
	= \log 2\beta +  \frac{1}{2} (\beta \varepsilon) - \frac{1}{8} \beta^2 \varepsilon^2 + O(\varepsilon^3)
\)
which yields
\begin{align*}
	a - a\log \frac{a^2-1}{\varepsilon} 
	&= (1+\beta\varepsilon) \del{ 1 - \log 2\beta -  \frac{1}{2} (\beta \varepsilon) + \frac{1}{8} \beta^2 \varepsilon^2 + O(\varepsilon^3) }
	\\
	&= (1- \log 2\beta)  + \frac{1}{2} \del{ \beta - 2\beta \log 2\beta} \varepsilon - \frac{3}{8} \beta^2 \varepsilon^2 + O(\varepsilon^3).
\end{align*}
Finally, from \cref{eq:angle_cos_sine_2}, we obtain
\[
	\boxed{
	\mathfrak{C} = 2\beta (1- \log 2\beta) \sqrt{1-\beta^2} 
	+ \beta^2 \del{ 1 - 2 \log 2\beta} \sqrt{1-\beta^2} \varepsilon
	- \frac{1}{4} \beta \del{ 1 + \beta^2 + (2\beta^2-1) \log 2\beta  } \sqrt{1-\beta^2} \varepsilon^2 + O(\varepsilon^3).
	}
\]

Now we can collect terms:
\begin{align*}
	\Pi_{\varepsilon^0} \del{ \mathfrak{A}+\mathfrak{B}+\mathfrak{C} - 4\pi \mathfrak{D} }  
	&= \Im  \Li (-e^{2i \omega}) + 2\beta (1- \log 2\beta) \sqrt{1-\beta^2}
	\\
	&= 
	\Im  \Li (-e^{2i \omega}) + 4(2\lambda-1) (1- \log (4\lambda-2) ) \sqrt{\lambda (1-\lambda)}
	= 4\pi h_0^{(2)},
	\\
	\Pi_\varepsilon \del{ \mathfrak{A}+\mathfrak{B}+\mathfrak{C} - 4\pi \mathfrak{D} }  
	&=
	\beta (\pi - 2\omega) + \beta^2 ( 1 - 2 \log 2\beta) \sqrt{1-\beta^2}
	\\
	&= (2\lambda -1) \sbr{ (\pi - 2\omega) + 2(2\lambda -1)\del{ 1 - 2 \log (4\lambda -2 )} \sqrt{\lambda (1-\lambda)} }
	= 4\pi h_1^{(2)},
	\\
	\Pi_{\varepsilon^2} \del{ \mathfrak{A}+\mathfrak{B}+\mathfrak{C} - 4\pi \mathfrak{D} }  
	&=
	\frac{\pi}{2} \beta^2 - \frac{1}{4} \beta^3 \sqrt{1-\beta^2} (1 + 2\log 2\beta  )
	\\
	&=
	\frac{1}{2} (2\lambda-1)^2 \sbr{ \pi - (2\lambda-1) \del{1 + 2\log(4\lambda - 2) } \sqrt{\lambda (1-\lambda)} }
	= 4\pi h_2^{(2)}.
\end{align*}

\section{Proof of Lemma \ref{lem:H_asymptotic_at_1}}\label{App:Proof_B}

	Recall from \cref{eq:alternative_H_general} that
	\[
		(8 \pi) F_\varepsilon(1;\varphi) \eqqcolon f(\varepsilon) = \int\limits_{\sfrac \pi 2}^\varphi  s^2(\theta) \del{ 2 \log {s(\theta)} - 1 } \dif \theta.
	\]
	Here, we know that \( \cos \varphi = \sfrac{-\varepsilon}{2}\); thus, \( \varphi \in ( \sfrac{\pi}{2}, \sfrac{2\pi}{3} ) \) and \( s(\theta) = -2\cos \theta\) inside the integration region. Now, let us introduce the change of variables \( \theta  \equiv u + \sfrac{\pi}{2}\) which yields \( \cos\theta = -\sin u\) and thus
	\begin{align}
		f(\varepsilon) &= \int\limits_{0}^{ \gamma}  4 (\sin u)^2 \del{ \log \del{4 \sin^2 u} - 1 } \dif u
		= \int\limits_{0}^{ \gamma}  4 (\sin u)^2 \del{ (2\log 2 - 1) + 2 \log(\sin u) } \dif u
		\notag
		\\
		&= (8 \log 2 -4) \int\limits_{0}^{ \gamma}  \sin^2 u \dif u
		+ 8 \int\limits_0^\gamma \sin^2 u \log(\sin u) \dif u
		\eqqcolon (8 \log 2 -4) g_1(\varepsilon) + 8 g_2(\varepsilon)
		;
		\label{eq:f_as_two_integrals}
	\end{align}
	where \(  \gamma \coloneqq \arcsin \sfrac \varepsilon 2\).
	Recall the expansions \( \arcsin \delta = \delta + (\sfrac{1}{6}) \delta^3 + O(\delta^5)\) and \( \sqrt{1 - \delta^2} = 1 - (\sfrac 1 2) \delta^2 + O(\delta^4)\), then
	\begin{align}
		g_1(\varepsilon) = \int\limits_{0}^{ \gamma}  \sin^2 u \dif u 
		&= \frac{1}{4} (2\gamma - \sin 2\gamma )
		= \frac{1}{4} \del{2\gamma - \varepsilon \sqrt{1-\frac{\varepsilon^2}{4} }}
		= \frac{1}{24} \varepsilon^3 + O(\varepsilon^5).
		\label{eq:expansion_f_1_int_1}
	\end{align}
	
	For the second integral in \cref{eq:f_as_two_integrals}, we recall that \( \lim\limits_{\varepsilon \downarrow 0} \varphi = \sfrac\pi 2\), for which the integration interval shrinks as \( \gamma \to 0\). In fact, \( g_2(\varepsilon)\) is clearly a differentiable function in \( [0,1]\). 
	Moreover, since \( (1-\delta)^{\sfrac{-1}{2}} =  \sum\limits_{k=0}^\infty 4^{-k} \binom{2k}{k} \delta^{k} \) for any \( |\delta| < 1\), we obtain
	\begin{align*}
		g_2'(\varepsilon) 
		&= \frac{\varepsilon^2}{4 \sqrt{4-\varepsilon^2} } \log \frac{\varepsilon}{2}
		= \frac{1}{8} \varepsilon^2 \del{ 1 - \frac{\varepsilon^2}{4} }^{\sfrac{-1} 2} \log \frac{\varepsilon}{2}
		\\
		&= \frac{1}{8} \varepsilon^2 \del{ \sum_{k=0}^\infty \frac{(2k)!}{16^{k} (k!)^2}  \varepsilon^{2k} } \log \frac{\varepsilon}{2}
		= \sum_{k=0}^\infty c_{k}  \varepsilon^{2k+2}  \log \frac{\varepsilon}{2},
	\end{align*}
	where \( c_k =16^{-k}  (2k)! \big/ 8 (k!)^2\). Then, since \( g_2(0)=0\), we have that
	\begin{align}
		g_2(\varepsilon) = \int\limits_0^\varepsilon g_2'(t) \dif t
		&=\sum_{k=0}^\infty c_{k} \int\limits_0^\varepsilon t^{2k+2}  \log \frac{t}{2} \dif t
		=\sum_{k=0}^\infty c_{k} \del{ \frac{\varepsilon^{2k+3}}{2k+3} \del{ \log \frac{\varepsilon}{2} - \frac{1}{2k+3} } }
		\notag
		\\
		&= 
		\frac{1}{24} \varepsilon^3 \del{ \log \varepsilon - \frac{1}{3} - \log 2 }  + O(\varepsilon^5 \log \varepsilon).
		\label{eq:expansion_f_1_int_2}
	\end{align}
	
	Replacing \cref{eq:expansion_f_1_int_1} and \cref{eq:expansion_f_1_int_2} in \cref{eq:f_as_two_integrals}, we finally obtain
	\begin{align}
		f(\varepsilon)
		&=
		(2\log 2 -1) \frac{1}{6} \varepsilon^3 
		+ 
		\frac{1}{3} \varepsilon^3 \del{ \log \varepsilon - \frac{1}{3} - \log 2 }  + O(\varepsilon^5 \log \varepsilon)
		=
		\frac{1}{3} \varepsilon^3 \del{ \log \varepsilon - \frac{5}{6} } + O(\varepsilon^5 \log \varepsilon).
		\label{eq:final_expansion_reduction}
	\end{align}

\section{Tables of relevant quantities and symbols}\label{App:Tables}

\begin{table}[h!]
\setlength{\tabcolsep}{0.2em}
\centering
\fontsize{8}{9}\selectfont
\def\arraystretch{1.95}
\begin{tabular}{c c l c l c l c c c  l}
\toprule
{ \bf Symbol } 
&& 
\multicolumn{1}{c}{{\bf Meaning }}
\\
\midrule
	\(\disc\) && The closed Euclidean unit ball \( B[0;1]\).
 	\\
	\( \raisebox{.05em}{\smalldisc}  \) && The intersection set \( B[0;1] \cap B[x; \varepsilon] \).
	\\
	\( \circleddash\) && The translated region \(x - \raisebox{.05em}{\smalldisc}\).
	\\
	\( \rotatebox[origin=c]{-90}{\LEFTcircle} \) && The portion of \(\circleddash\) in the closed upper plane: 
	\(\circleddash \cap \R \times \R_{\geq 0}\).
	\\
\bottomrule
\end{tabular}
\\[0.5em]
\caption{Graphical notation used throughout this note.}
\vspace{-0.5\baselineskip}
\label{tb:Symbols}
\end{table}

\begin{table}[h!]
\setlength{\tabcolsep}{0.4em}
\centering
\fontsize{8}{9}\selectfont
\def\arraystretch{1.75}
\begin{tabular}{c c c c c c c c c c  l}
\toprule
\multirow{2}{*}{ \bf Angle $\theta$ } 
&&  \multicolumn{7}{c}{\bf Quantities of interest }
&& \multirow{2}{*}{ \bf Comments } 
\\  \cline{3-9}
&& \(s(\theta)\) && \(L(\theta)\) && \( P(\theta) \) && \( \Phi(\theta) \)
%
%
%
\\
\midrule
	0	&& 	\(1-a\)	&&	\(-1\) && \(2\log |1-a|\)	&&	\( \frac \pi 2\)
	\\
	\(\frac{\pi}{2}\)	&& 	\(0\)	&&	\( -1 \)	&& \(2\log \varepsilon\)	&&	\( \ \frac \pi 2 \) && 
	\\
	\( 2\pi\) && \(1-a\) && \(-1\) && \(2\log |1-a|\) && \( \frac \pi 2\)
	\\
	\hline
	\multicolumn{11}{c}{ \( \sin \alpha = \frac 1 a \) } 
	\\[0.5em]
	\( \alpha \)	&& 	\( -\sqrt{a^2 -1} \)	&&	\( - \frac 1 a \)	&&	\( 2 \log (a^2-1)\) &&  \( \frac{1}{4} (2\alpha + \pi) \) && \( a \geq 1\)
	\\
	\(\pi-\alpha\)	&& 	\( -s(\alpha) \)	&&	\( - \frac 1 a\)	&&	\(P(\alpha)\) &&
	\( \frac{1}{4} ( 2\alpha + \pi) \) 
	\\
	\(2\pi - \alpha\)	&& 	\( \hphantom{-}s(\alpha)\)	&&	\(- \frac 1 a\)	&&	\(P(\alpha)\) &&	\(\frac{1}{4} ( 2\alpha + \pi) \)
	\\
	\hline
	\multicolumn{11}{c}{ \(\varphi = \arccos \frac{1 - a^2 - \varepsilon^2}{2a\varepsilon} \) } 
	\\[0.5em]
	\( \varphi\)	 	&& 	\( \varepsilon \)	&&	\( \frac{\varepsilon^2 - 1 - a^2}{2a} \)	&& \( 2\log \varepsilon\) &&	\( \times\) && \( a^2 \leq 1 + \varepsilon^2\)
	\\
	\( \varphi\)	 	&& 	\( \frac{a^2 -1}{\varepsilon} \)	&& \( \frac{(a^2 -1)^2}{2a \varepsilon^2} - \frac{1 + a^2}{2a} \)	&&	\( 2 \log \frac{a^2-1}{\varepsilon} \) &&	\( \times\) && \( a^2 > 1 + \varepsilon^2\)
	\\
	\( \varphi + \pi\) && \(-\frac{a^2-1}{s(\varphi)}\) && \( \frac{ (a^2-1)^2 }{1+a^2 + 2a L(\varphi)} - \frac{1+a^2}{2a} \) && \(2 \log |a^2-1| - P(\varphi) \)  && \(\times\) && \( a\neq 1\)
	\\
	\( 0 \)	 	&& 	\( \varepsilon \)	&&	\( -1 \)	&& \(2 \log \varepsilon \) &&	\( \frac \pi 2\) && \( \varphi|_{a=1-\varepsilon}\)
	\\
	\( 
	\arccos -\frac \varepsilon 2\) && \(\varepsilon\) && \(\frac{1}{2} ( \varepsilon^2 - 2 )\) && \( 2\log \varepsilon\) && \( \times \) && \( \varphi |_{a = 1}\)
	\\
	\( 
	\arccos -\frac{\varepsilon}{ \sqrt{ 1+ \varepsilon^2 } }\)	 	&& 	\( \varepsilon \)	&&	\( -(1+\varepsilon^2)^{-\sfrac 1 2} \)	&& \( 2 \log \varepsilon \) &&	\( \times\) && \( \varphi|_{a^2=1+\varepsilon^2}\)
	\\
	\( \pi\)	 	&& 	\( 2+\varepsilon \)	&&	\( 1 \)	&& \(2 \log(\varepsilon + 2) \) &&	\( 0\) && \( \varphi|_{a=1+\varepsilon}\)
	\\
\bottomrule
\end{tabular}
\\[0.5em]
\caption{Selected values of \(s(\theta) = -a \cos \theta + \sqrt{1 - a^2 \sin^2 \theta}\), \(L(\theta) = \dfrac{s^2(\theta) - 1 - a^2}{2a} \), \(P(\theta) \coloneqq \log\del{1+ a^2 +2a L(\theta) } = 2\log |s(\theta)| \), and \( \Phi(\theta) = \frac{1}{2} \arccos L(\theta)\). 
\newline
\textbf{Notes:} (a) We use the marker \(\times\) when no closed-form expression is available. (b) The values for \( \theta = \sfrac \pi 2\) are only relevant for \(a = 1\). }
\vspace{-0.5\baselineskip}
\label{tb:Relevant_Quantities_and_Values}
\end{table}

\begin{table}[h!]
\vspace{2\baselineskip}
\setlength{\tabcolsep}{0.2em}
\centering
\fontsize{8}{9}\selectfont
\def\arraystretch{1.95}
\begin{tabular}{c c l c l c l c c c  l}
\toprule
\multirow{2}{*}{ \bf Radius $a$ } 
&&
\multirow{2}{*}{ \bf Angle $\theta$ } 
&&  \multicolumn{4}{c}{\bf Quantities of interest }
& \multirow{2}{*}{ \bf Comments } 
\\  \cline{5-7}
&&&& \( \Phi(\theta) \) && \multicolumn{1}{c}{\( G(a; \Phi(\theta) ) \)}
\\
\midrule
	\(a \) && \(	\{0, \sfrac \pi 2, 2\pi\} \)	&& 	\(\frac \pi 2\)	&&	
	\(  \pi   
	\begin{cases} 
		(1-a^2) & a\leq 1, 
		\\
		-2 \ln a & a > 1, 
	\end{cases}
 \)
 	\\
	\(1 - \varepsilon\) && \(\varphi = 0\) && \(\frac \pi 2\) && \( \pi \varepsilon (2-\varepsilon)\) 
	\\
	\( 1 \) && \(\varphi = \arccos -\frac \varepsilon 2\) && 
	\( \arccos \frac{\varepsilon}{2} \) 
	&& \( 2 \Im \Li \del{- e^{2i\Phi(\varphi) }} + \varepsilon \sqrt{4 - \varepsilon^2}  (1 - \log \varepsilon) \)
	&& 
	\\
	\( 1+\varepsilon\) && \(\varphi + \pi = 2\pi\) && \(\frac \pi 2\) && \(2\pi \ln (1+\varepsilon)\) 
	\\
	\(a\) && \( \{\alpha, \pi-\alpha, 2\pi-\alpha\} \)	&&  \( \frac{1}{4} (2\alpha + \pi) \) &&
	\(
		2 \Im \Li \del{ -a e^{i(\alpha + \sfrac \pi 2)} } + (1-a^2) \alpha  + \del{2 - \log(a^2 - 1) } \sqrt{a^2 -1}   
	\)
	&& \( \sin \alpha = \sfrac 1 a \leq 1 \)
	\\
\bottomrule
\end{tabular}
\\[0.5em]
\caption{Selected values of \(G(a;\phi) = 2 \Im \big( \Li (-a e^{2i\phi}) \big)
	+ (1-a^2) \left[ 2 \phi - \arctan\left(\frac{ a \sin 2\phi}{1 + a \cos 2\phi} \right)  \right]
	+ a\big( 2- \log(1+ a^2 +2a\cos 2\phi)  \big)\sin 2\phi
	\), for relevant values of \((a,\theta)\) and \(\phi = \Phi(\theta)\).}
\vspace{-0.5\baselineskip}
\label{tb:Values_of_G}
\end{table}

\end{appendices}

\end{document}